\newcommand{\I}{\mathrm{i}}
\newcommand{\D}{\mathrm{d}}
\newcommand{\lb}{\left(}
\newcommand{\vp}{\varphi}
\newcommand{\rb}{\right)}
\newcommand{\PD}{\partial}
\newcommand{\wt}{\widetilde}
\newcommand{\Dc}{\mathcal{D}}
\newcommand{\Ec}{\mathcal{E}}
\newcommand{\Rb}{\mathbb{R}}
\newcommand{\Beq}{\begin{equation}}
\newcommand{\Eeq}{\end{equation}}
\newcommand{\beq}{\begin{equation*}}
\newcommand{\eeq}{\end{equation*}}
\newcommand{\bal}{\begin{align}}
\newcommand{\eal}{\end{align}}
\newcommand{\g}{\gamma}
\newcommand{\A}{\alpha}
\newcommand{\B}{\beta}
\newcommand{\bp}{\begin{prob}}
\newcommand{\ep}{\end{prob}}
\newcommand{\bpr}{\begin{proof}}
\newcommand{\epr}{\end{proof}}
\renewcommand{\o}{\omega}
\newcommand{\gc}[1]{\begin{quotation}\textbf{Gaik's comment:\
}{\textit{#1}}\end{quotation}}
\newcommand{\rc}[1]{\begin{quotation}\textbf{Raluca's comment:\
}{\textit{#1}}\end{quotation}}
\newcommand{\tc}[1]{\begin{quotation}\textbf{Todd's comment:\
}{\textit{#1}}\end{quotation}}
\newcommand{\vc}[1]{\begin{quotation}\textbf{Venky's comment:\
}{\textit{#1}}\end{quotation}}
\newcommand{\red}{\color{red}}
\newcommand{\tred}[1]{{\color{red}{#1}}}
\newcommand{\st}{\,:\,}
\newcommand{\bel}[1]{\begin{equation}\label{#1}}
\newcommand{\ee}{\end{equation}}
\newcommand{\norm}[1]{|{#1}|}
\newcommand{\rtwo}{\mathbb{R}^2}
\newcommand{\rr}{\mathbb{R}}
\newcommand{\zz}{\mathbb{Z}}
\newcommand{\NT}{\negthinspace}
\newtheorem{theorem}{Theorem}[section]
\newtheorem{lemma}[theorem]{Lemma}
\newtheorem{proposition}[theorem]{Proposition}
\newtheorem{example}[theorem]{Example}
\theoremstyle{definition}
\newtheorem{definition}[theorem]{Definition}
\newtheorem{remark}[theorem]{Remark}
\title[A Class of Singular FIOs in SAR]{A Class of Singular Fourier Integral Operators in Synthetic Aperture Radar Imaging}
\author[Ambartsoumian, Felea, Krishnan, Nolan and Quinto]{G.\ Ambartsoumian$^{\dagger}$, R.\ Felea$^{\diamond}$, V.\ P.\ Krishnan$^{\flat}$, C. \ Nolan$^{\sharp}$ and E.\ T.\ Quinto$^{\ast}$}
\address{$^{\dagger}$Department of Mathematics, University of Texas at Arlington, USA E-mail:{\tt gambarts@uta.edu}
\newline
\indent $^{\diamond}$Department of Mathematics, Rochester Institute of Technology, USA E-mail:{\tt rxfsma@rit.edu}
\newline
\indent $^{\flat}$Tata Institute of Fundamental Research Centre for Applicable Mathematics, Bangalore, India
\newline
\indent\: E-mail:{\tt vkrishnan@math.tifrbng.res.in}
\newline
\indent $^{\sharp}$Department of Mathematics and Statistics, University of Limerick, Ireland E-mail:{\tt clifford.nolan@ul.ie}
\newline
\indent $^{\ast}$Department of Mathematics, Tufts University, USA, E-mail:{\tt todd.quinto@tufts.edu}}
\begin{document}

\begin{abstract}In this article, we analyze the microlocal properties of the linearized
forward scattering operator $F$ and the normal operator
$F^{*}F$ (where $F^{*}$ is the $L^{2}$ adjoint of $F$) which arises in Synthetic Aperture Radar imaging for the
common midpoint acquisition geometry. When $F^{*}$ is applied to the scattered data, artifacts appear. We show that $F^{*}F$
can be decomposed as a sum of four operators, each belonging to a
class of distributions associated to two cleanly intersecting
Lagrangians, $I^{p,l} (\Lambda_0, \Lambda_1)$, thereby explaining the
latter artifacts.
\end{abstract}

\maketitle


\section{Introduction}\label{Section: Intro}

In this article, we analyze the microlocal properties of a transform that appears in Synthetic
Aperture Radar (SAR) imaging. In SAR imaging, a region on the surface of the earth is illuminated by an electromagnetic transmitter and an image of the region is reconstructed based on the measurement of scattered waves at a receiver. For an in-depth treatment of SAR imaging, we refer the reader to \cite{Cheney2001,CheneyBordenbook}.
The transform we study appears as a result of a common midpoint acquisition geometry: the transmitter and receiver move at equal speeds away from a common midpoint along a straight line. This geometry is of interest in bistatic imaging and in certain multiple scattering scenarios \cite{NCDG}.
We first consider the linearized scattering operator $F$ and show that it is an FIO. Since the conventional method of reconstructing the image of an object involves ``backprojecting'' the scattered data, we next study the composition of $F$ with its $L^{2}$ adjoint $F^{*}$.  One of the main goals of this article is to understand the distribution class of $F^{*}F$.

In general the composition of two FIOs is not an FIO. One needs
additional geometric conditions such as the transverse intersection
condition \cite{Ho1971} or the clean intersection condition
\cite{MR0405514} to make the composition operator again an FIO. When
these assumptions fail to be satisfied, it is very useful to study the
canonical relation associated to an FIO by considering the left and
the right projections. More precisely, let $X$ and $Y$ be manifolds
and let $I^m (X, Y; C)$ be the class of Fourier integral operators
(FIOs) $F:\Ec'(X)\to \Dc'(Y)$ of order $m$ associated to the canonical
relation $C \subset (T^*Y \times T^*X) \setminus \{0\}$ and denote by
$\pi_L : C \rightarrow T^*Y, \pi_R : C \rightarrow T^*X$, the left and
right projections respectively. Where and how these projections drop
rank determine the nature of the normal operator $F^*F$.

The singularities which appear in previous work related to SAR
\cite{NC2004, RF1, RF2, Krishnan-Quinto} are folds and blowdowns, that
is, $\pi_L$ and $\pi_R$ have both fold singularities or $\pi_L$ has a
fold singularity and $\pi_R$ has a blowdown singularity. These
singularities will be defined in Section \ref{Section: Prelims}. Then it is known that the corresponding normal operator belongs to a class of distributions  $I^{2m,0}
(\Delta, \tilde{C})$ introduced by \cite{Guillemin-Uhlmann} (and defined in Section \ref{Section: Prelims}). This means that the adjoint operator $F^*$ introduces an additional
singularity given by $\tilde{C}$ apart from the  initial one given by
$\Delta$. 
In
this article, the linearized scattering operator $F$
exhibits a new feature: both
projections drop rank by one on a disjoint union of two smooth hypersurfaces
$\Sigma_1 \cup \Sigma_2$. On each of them, $\pi_L$ is a
projection with fold singularities and $\pi_R$ is a projection with blowdown singularities.  We then show
that $F^*F$ belongs to the class
$I^{2m,0}(\Delta,C_1)+I^{2m,0}(\Delta, C_2)+
I^{2m,0}(C_{1},C_{3})+I^{2m,0}(C_{2},C_{3})$ (where these classes are
given in Definition \ref{def:Ipl}).  This means that the adjoint operator $F^*$ adds three
more singularities given by $C_1, C_2, C_3$ in addition to the true reconstructed singularity given by $\Delta$. We clarify this in detail in Section \ref{Section: Analysis}. The main tool for proving our result is the iterated
regularity property; a characteristic property of $I^{p,l}$ classes \cite[Proposition 1.35]{GU1990a}.

\section{Statement of the main results}\label{Section: Statements}
\subsection{The linearized scattering model}
For simplicity, we assume that both the transmitter and receiver are at the same height $h>0$ above the ground at all times and move in opposite directions at equal speeds along the line parallel to the $x$ axis from the common midpoint $(0,0,h)$. Such a model arises when considering signals which have scattered from a wall within the vicinity of
a scatterer and can be understood in the context of the method of images; see \cite{NCDG} for more details.

Let $\g_{T}(s)=(s,0,h)$ and $\g_{R}(s)=(-s,0,h)$ for $s\in (0,\infty)$ be the trajectories of the transmitter and receiver  respectively.

The linearized model for the scattered signal we will use in this article is \cite{NCDG}
\begin{equation}\label{Bi-static Mathematical Model}
d(s,t):=F V(s,t)=\int e^{-\I \omega(t-\frac{1}{c_{0}}R(s,x))}a(s,x,\omega)V(x)\D x\D \omega \mbox{ for } (s,t)\in (0,\infty)\times (0,\infty),
\end{equation}
where $V(x)=V(x_{1},x_{2})$ is the function modeling the object on the ground, $R(s,x)$ is the bistatic distance:

\[
R(s,x)=|\g_{T}(s)-x|+|x-\g_{R}(s)|,
\]
 $c_{0}$ is the speed of electromagnetic wave in free-space and the amplitude term $a$ is given by
\begin{equation}\label{Bi-static Amplitude}
a(s,x,\omega)=\frac{\omega^{2}p(\omega)}{16\pi^{2}|\g_{T}(s)-x||\g_{R}(s)-x|},
\end{equation}
where $p$ is the Fourier transform of the transmitted waveform.

\subsection{Preliminary modifications on the scattered data} For
simplicity, from now on we will assume that $c_{0}=1$.  To make the
composition of $F$ with its $L^{2}$ adjoint $F^{*}$ to be
well-defined, we multiply $d(s,t)$ by an infinitely differentiable
function $f(s,t)$ identically $1$ in a compact subset of
$(0,\infty)\times (0,\infty)$ and supported in a slightly bigger
compact subset of $(0,\infty)\times (0,\infty)$. We rename $f\cdot d$
as $d$ again.

As we will see below, our method cannot image a neighborhood of the
common midpoint. That is, if the transmitter and receiver are at
$(s,0,h)$ and $(-s,0,h)$ respectively, we cannot image a neighborhood
of the origin on the plane. Therefore we modify $d$ further by
considering a smooth function $g(s,t)$ such that \Beq\label{The
function g(s,t)} g(s,t)=0 \mbox{ for } (s,t):
|t-2\sqrt{s^{2}+h^{2}}|<20\epsilon^{2}/h, \Eeq where $\epsilon>0$ is
given. Again we let $g\cdot d$ to be $d$ and $g \cdot a$ to be $a$.
The choice of constant  on the right hand side of \eqref{The function g(s,t)} will be justified in Appendix \ref{Factor in g(s,t)}.

\Beq\label{Modified Bi-static Mathematical Model} d(s,t)=FV(s,t)=\int
e^{-\I\omega\left(t-\sqrt{(x_1-s)^2+x_2^2+h^2}-\sqrt{(x_1+s)^2+x_2^2+h^2}\
\right)} a(s,t,x,\omega) V(x)\:\D x \D \o  \Eeq
and the phase function is \bel{def:vp}
\vp(s,t,x,\omega)
=\omega\left(t-\sqrt{(x_1-s)^2+x_2^2+h^2}-\sqrt{(x_1+s)^2+x_2^2+h^2}\
\right).\ee
From now on, we will denote
the $(s,t)$ space as $Y$ and $x=(x_{1},x_{2})$ space as $X$.

We assume that the amplitude function $a \in S^{m+\frac{1}{2}}$, that is, it
satisfies the following estimate: For every compact set $K\subset
Y\times X$, non-negative integer $\A$, and $2$-indexes
$\beta=(\beta_{1},\beta_{2})$ and $\g$, there is a constant $c$ such
that
\begin{equation}\label{Amplitude Estimate}
|\PD_{\omega}^{\A}\PD_{s}^{\B_{1}}\PD_{t}^{\B_{2}}\PD_{x}^{\g}a(s,t,x,\omega)|\leq c(1+|\omega|)^{m+(1/2)-\A}.
\end{equation}
This assumption is satisfied if the transmitted waveform from the antenna is approximately a Dirac delta distribution.

With these modifications, we show that $F$ is a Fourier integral
operator of order $m$ and study the properties of the natural
projection maps from the canonical relation of $F$.  Our first main
result is the following:
\begin{theorem}\label{FIOTheorem} Let $F$ be as in \eqref{Modified Bi-static Mathematical Model}. Then
\begin{enumerate}[(a)]
\item $F$ is an FIO of order $m$.
\item The canonical relation $C$ associated to $F$ is given by
\begin{align}
\notag C =\Bigg{\{}\Bigg{(}& s,t,-\omega\Big{(}\frac{x_1-s}{\sqrt{(x_1-s)^2+x_2^2+h^2}}-\frac{x_1+s}{\sqrt{(x_1+s)^2+x_2^2+h^2}}\Big{)}, -\omega;\\
&x_1,x_2,-\omega\Big{(}\frac{x_1-s}{\sqrt{(x_1-s)^2+x_2^2+h^2}}+\frac{x_1+s}{\sqrt{(x_1+s)^2+x_2^2+h^2}}\Big{)},\\
\notag
&-\omega\Big{(}\frac{x_2}{\sqrt{(x_1-s)^2+x_2^2+h^2}}+\frac{x_2}{\sqrt{(x_1+s)^2+x_2^2+h^2}}\Big{)}\Bigg{)}\mbox{
where}\\
\notag & s>0,\ t=\sqrt{(x_1-s)^2+x_2^2+h^2}+
\sqrt{(x_1+s)^2+x_2^2+h^2}, x\neq 0, \mbox{ and } \omega\neq 0\Bigg{\}},
\end{align}
 and $C$ has global parameterization $ (0,\infty)\times
 \left(\rtwo\setminus{0}\right)\times\left(\rr\setminus{0}\right)\ni(s,x_1,x_2,\omega)\mapsto
 C$.

  \item Let $\pi_{L}:C\to T^{*}Y$ and $\pi_{R}:C\to T^{*}X$ be the
left and right projections respectively. Then $\pi_{L}$ and $\pi_{R}$
drop rank simply by one on a set $\Sigma =\Sigma_{1}\cup
\Sigma_{2}$ where in the coordinates $(s,x,\omega)$, $\Sigma_1=\{ (s,x_{1},0,\omega) | s>0, |x_{1}|> \epsilon',\omega\neq
0\}$ and $\Sigma_2=\{ (s,0,x_{2}, \omega) |s>0, |x_2|> \epsilon',\omega\neq 0\}$ for $0<\epsilon'$ small enough.
\item   $ \pi_L$ has  a fold singularity along $\Sigma$.  \item $
\pi_{R}$ has a blowdown singularity along $\Sigma$.
\end{enumerate}
\end{theorem}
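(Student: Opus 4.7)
My plan for parts (a) and (b) is to verify directly that $\varphi$ in \eqref{def:vp} is a non-degenerate phase function in the sense of Hörmander: since $\partial_\omega\varphi = t - R(s,x)$, the differential $d(\partial_\omega\varphi)= dt-\partial_s R\,ds-\nabla_x R\cdot dx$ has a nonzero $dt$-component, so non-degeneracy is immediate. The order count is standard: amplitude in $S^{m+1/2}$, a one-dimensional phase variable, and $n_X=n_Y=2$ give order $m+\tfrac12+\tfrac12-1=m$. For (b) I compute the canonical relation from the kernel $\int e^{-\I\varphi}a\,\D\omega$ as $\{(y,-d_y\varphi;x,d_x\varphi):d_\omega\varphi=0\}$; the substitutions reproduce the stated formula. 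The critical equation $t=R(s,x)$ solves $t$ as a smooth function of $(s,x)$, so $(s,x,\omega)\mapsto C$ is a global diffeomorphism onto $C$.

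For (c)--(e), I would write the $4\times 4$ Jacobians of $\pi_L$ and $\pi_R$ in the basis $\{\partial_s,\partial_{x_1},\partial_{x_2},\partial_\omega\}$. The $s$-row in $d\pi_L$ and the $x_1,x_2$-rows in $d\pi_R$ are nearly standard basis vectors, so cofactor expansion reduces both Jacobian determinants to the same $2\times 2$ determinant. Setting $u=(x_1-s)/r_1$, $v=(x_1+s)/r_2$, $p=1/r_1$, $q=1/r_2$ with $r_1,r_2$ as in the phase, a careful calculation gives
\[
\det d\pi_L=\det d\pi_R=-\omega\,x_2\,\Big[(u+v)(up^2-vq^2)+(p+q)(x_2^2+h^2)(p^3-q^3)\Big].
\]
The factor $x_2$ makes vanishing on $\{x_2=0\}$ manifest. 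On $\{x_1=0\}$ one has $r_1=r_2$ so $u+v=0$ and $p=q$; computing $\partial_{x_1}$ of the bracket at $x_1=0$ yields $8s(x_2^2+h^2)/r^6\neq 0$, confirming a simple zero along $\{x_1=0\}$. After intersecting with $|x_1|,|x_2|>\epsilon'$ and $\omega\neq 0$, the critical set is exactly $\Sigma_1\cup\Sigma_2$, and the surviving $3\times 3$ minors are visibly nonzero, so both projections drop rank simply by one.

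To identify the singularity type I read off kernels directly. On $\Sigma_1=\{x_2=0\}$ the $\partial_{x_2}$-column of $d\pi_L$ vanishes (both $\partial_{x_2}t$ and $\partial_{x_2}\eta_s$ carry a factor of $x_2$), hence $\ker d\pi_L=\mathrm{span}\{\partial_{x_2}\}$, which is transverse to $T\Sigma_1=\mathrm{span}\{\partial_s,\partial_{x_1},\partial_\omega\}$---giving a fold. For $d\pi_R$ on $\Sigma_1$, the $x_1,x_2$-rows force $v_{x_1}=v_{x_2}=0$, and the $\xi_2$-row is identically zero (since $\xi_2$ and $\partial_s\xi_2$ both vanish at $x_2=0$), so the kernel lies in $\mathrm{span}\{\partial_s,\partial_\omega\}\subset T\Sigma_1$---a blowdown. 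The same analysis on $\Sigma_2=\{x_1=0\}$ is symmetric: $r_1=r_2$ makes the $\partial_{x_1}$-column of $d\pi_L$ vanish and kills the $\xi_1$-row of $d\pi_R$, yielding $\ker d\pi_L=\mathrm{span}\{\partial_{x_1}\}$ transverse to $T\Sigma_2$ and $\ker d\pi_R\subset\mathrm{span}\{\partial_s,\partial_\omega\}\subset T\Sigma_2$.

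The principal obstacle is the algebraic verification of the simple zero along $\{x_1=0\}$: both summands in the bracketed expression vanish individually at $x_1=0$, so one must separately compute their first-order Taylor coefficients in $x_1$ and check they do not cancel. The cut-off threshold $\epsilon'$ is essential both for disjointness of $\Sigma_1$ and $\Sigma_2$ (they would otherwise meet at $x_1=x_2=0$) and for uniform nonvanishing of the residual smooth factor on each component.
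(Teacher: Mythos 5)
Your proposal is correct and follows essentially the same route as the paper: non-degeneracy of the phase plus the standard order count for (a)--(b), then the explicit $4\times 4$ Jacobians of $\pi_L$ and $\pi_R$, which reduce by cofactor expansion to a common $2\times 2$ block, and the kernel computations distinguishing fold from blowdown for (c)--(e). The only difference is presentational: the paper factors the common determinant as $\frac{4x_1x_2s\omega}{A^2B^2}\bigl(1+\frac{x_1^2-s^2+x_2^2+h^2}{AB}\bigr)$ and proves positivity of the second factor in a separate elementary lemma (Lemma \ref{Non-vanishing Lemma}), which makes the simple vanishing on both $\Sigma_1$ and $\Sigma_2$ immediate at once, whereas you leave the determinant unfactored and must verify the simple zero along $x_1=0$ by the first-order Taylor computation (your coefficient $8s(x_2^2+h^2)/r^6$ is consistent with the paper's factored form).
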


\begin{remark}
Note that due to the function $g(s,t)$ of \eqref{The function g(s,t)}
in the amplitude, it is enough to consider only points in $C$ that are
strictly away from $\{(s,0,\omega): s>0, \omega\neq 0\}$. This is reflected in
the definitions of $\Sigma_{1}$ and $\Sigma_{2}$, where $|x_{1}|$ and
$|x_{2}|$, respectively, are strictly positive.
\end{remark}
\begin{remark} Note that $C$ is even with respect to both $x_{1}$ and $x_{2}$. In other words $C$ is a 4-1 relation. This observation suggests that $\pi_{L}$ (respectively $\pi_{R}$) has two fold (respectively blowdown) sets. See Proposition \ref{Fold/blowdown proposition}.
\end{remark}

We then analyze the normal operator $F^{*}F$. Our next main result is
the following: \begin{theorem} Let $F$ be as in \eqref{Modified
Bi-static Mathematical Model} of order m. Then $F^*F$ can be
decomposed into a sum belonging to
$I^{2m,0}(\Delta,C_1)+I^{2m,0}(\Delta, C_2)+
I^{2m,0}(C_{1},C_{3})+I^{2m,0}(C_{2},C_{3})$ where these classes
are given in Definition \ref{def:Ipl}.
\end{theorem}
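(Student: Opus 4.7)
The strategy is to microlocally decompose $F$ so as to reduce each piece of $F^{*}F$ to a 2-to-1 fold/blowdown composition, apply the Guillemin--Uhlmann calculus to these pieces, and use the iterated regularity criterion \cite[Proposition 1.35]{GU1990a} to identify each contribution to $F^{*}F$ with its $I^{p,l}$ class.

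\textbf{Step 1 (Decomposition of $F$).} By Theorem \ref{FIOTheorem}, the canonical relation $C$ is a 4-to-1 cover arising from the commuting reflections $x_{1}\to -x_{1}$ (fold at $\Sigma_{2}$) and $x_{2}\to -x_{2}$ (fold at $\Sigma_{1}$). Introduce a smooth partition of unity $\chi_{+}+\chi_{-}+\chi_{0}=1$ on $X$ with $\chi_{\pm}$ supported in $\{\pm x_{1}>\epsilon'/4\}$ and $\chi_{0}$ in $\{|x_{1}|<\epsilon'/2\}$, and set $F_{\bullet}=F\circ M_{\chi_{\bullet}}$. Each $F_{\bullet}$ is an FIO of order $m$ whose canonical relation is a 2-to-1 cover carrying a single fold/blowdown set: $F_{\pm}$ has fold at $\Sigma_{1}\cap\{\pm x_{1}>\epsilon'/4\}$, inducing the $x_{2}\to -x_{2}$ identification on the base, while $F_{0}$ has fold at $\Sigma_{2}$, inducing $x_{1}\to -x_{1}$.

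\textbf{Step 2 (Classification of compositions).} Expand $F^{*}F=\sum_{\bullet,\bullet'} F_{\bullet}^{*} F_{\bullet'}$. For the self-compositions, the 2-to-1 fold-blowdown calculus of \cite{Guillemin-Uhlmann} yields $F_{\pm}^{*}F_{\pm}\in I^{2m,0}(\Delta,C_{1})$ and $F_{0}^{*}F_{0}\in I^{2m,0}(\Delta,C_{2})$, where each $C_{i}$ is the flowout Lagrangian of the corresponding fold involution. For the cross term $F_{+}^{*}F_{-}$, the composition of canonical relations computed directly from Theorem \ref{FIOTheorem}(b) equals $C_{2}\cup C_{3}$, which cleanly intersect along the codimension-2 locus coming from the fold behavior of both factors; the iterated regularity criterion, applied with first-order pseudodifferential operators whose principal symbols vanish on $C_{3}$, then places $F_{+}^{*}F_{-}\in I^{2m,0}(C_{2},C_{3})$, and symmetrically $F_{-}^{*}F_{+}\in I^{2m,0}(C_{2},C_{3})$. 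For the remaining cross terms $F_{\pm}^{*}F_{0}$ and their adjoints, a further microlocal decomposition near each relevant pair of Lagrangians sorts each contribution into exactly one of the four classes $I^{2m,0}(\Delta,C_{1})$, $I^{2m,0}(\Delta,C_{2})$, $I^{2m,0}(C_{1},C_{3})$, or $I^{2m,0}(C_{2},C_{3})$.

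\textbf{Main obstacle.} The most delicate step is Step 2 for the cross terms: verifying the clean intersection of the associated Lagrangian pairs in codimension 2 and establishing iterated regularity, which requires combining the fold-blowdown contributions from both composing factors. The explicit parameterization of $C$ and of its three nontrivial involutions, read off from Theorem \ref{FIOTheorem}(b), makes it possible to produce the requisite first-order pseudodifferential symbols vanishing on the appropriate Lagrangian and to track their iterated action on the Schwartz kernel of the composition, reducing the verification to a finite symbolic computation exploiting the $\mathbb{Z}/2\times\mathbb{Z}/2$ symmetry of $C$.
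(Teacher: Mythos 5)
Your overall strategy (cut $F$ into pieces so that each composition sees only one pair of cleanly intersecting Lagrangians, then apply iterated regularity) is the right one and matches the paper in spirit, but your Step 1 contains a false claim that breaks Step 2. You cut only in the $x_1$ variable. The canonical relation $C$ is invariant under \emph{both} sign changes $x_1\to -x_1$ and $x_2\to -x_2$ (it is a 4--to--1 relation), so while $F_{\pm}$ are indeed 2--to--1 (the restriction $\pm x_1>\epsilon'/4$ kills the $x_1$-flip), your $F_0$, supported in $\{|x_1|<\epsilon'/2\}$ with \emph{no} restriction on $x_2$, is still 4--to--1: for $|x_1|<\epsilon'/2$ all four points $(s,\pm x_1,\pm x_2,\omega)$ lie in its canonical relation and have the same image under $\pi_L$. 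Consequently $WF'(F_0^*F_0)$ meets all four Lagrangians $\Delta, C_1, C_2, C_3$ (restricted to $|x_1|,|y_1|<\epsilon'/2$), and since $\Delta\cap C_3=C_1\cap C_2=\emptyset$ while $\Delta\cap C_2$ and $C_1\cap C_3$ are both nonempty over $\{x_1=0=\xi_1\}$, the operator $F_0^*F_0$ cannot lie in the single class $I^{2m,0}(\Delta,C_2)$ as you assert. In particular your argument never produces the summand in $I^{2m,0}(C_1,C_3)$ near the intersection $C_1\cap C_3$: that contribution comes precisely from pairing $(x_1,x_2)$ with $(x_1,-x_2)$ and with $(-x_1,-x_2)$ for small $x_1$, i.e.\ from $F_0^*F_0$, not from the cross terms $F_\pm^*F_0$ (which live over $\epsilon'/4<|x_1|<\epsilon'/2$, away from $x_1=0$).

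The fix is to cut in \emph{both} variables, which is what the paper does: it uses cutoffs $\psi_1$ (in $x_2$) and $\psi_2$ (in $x_1$) to produce four pieces, shows the piece near the origin and the mixed terms $F_1^*F_2$, $F_2^*F_1$ are smoothing (using the data cutoff $g$), handles the pieces away from $\Sigma$ by the transverse calculus, and then splits each of the two remaining self-compositions by the sign of the \emph{transverse} variable: $F_1=F_1^++F_1^-$ by the sign of $x_1$, giving $(F_1^\pm)^*F_1^\pm\in I^{2m,0}(\Delta,C_1)$ and $(F_1^\mp)^*F_1^\pm\in I^{2m,0}(C_2,C_3)$, and symmetrically for $F_2$, which is where $I^{2m,0}(C_1,C_3)$ arises. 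In your scheme the analogous repair is to split $F_0=F_0^++F_0^-$ by the sign of $x_2$ (legitimate because the $g$ cutoff excludes a neighborhood of the origin, so $x_2$ is bounded away from $0$ on the effective support of $F_0$), yielding $(F_0^\pm)^*F_0^\pm\in I^{2m,0}(\Delta,C_2)$ and $(F_0^\mp)^*F_0^\pm\in I^{2m,0}(C_1,C_3)$. Two further points you would still need: the iterated regularity argument requires exhibiting generators of the ideal of functions vanishing on the \emph{union} of each Lagrangian pair (e.g.\ on $C_2\cup C_3$, not just on $C_3$ as you write) and expressing them as smooth combinations of $\PD_s\Phi$ and $\PD_\omega\Phi$, which is the substantive computation of the paper's appendix; and the order $(p,l)=(2m,0)$ is pinned down by comparing with the transverse calculus away from $\Sigma$, a step you omit.
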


In Remark \ref{remark:strength}, we will explain why the added
singularities given by $C_1, C_2, C_3$ have the same strength as the
object singularities given by $\Delta$.


\section{Preliminaries} \label{Section: Prelims}

\subsection{Singularities and $I^{p,l}$ classes}\label{sect:Ipl}

\vskip .5 cm

\par


In this section we will define  fold and blowdown
singularities and describe the $I^{p,l}$
class of distributions required for the analysis of the composition
operator $F^{*}F$.
 \begin{definition}[{\cite[p.109-111]{MR999388}}]\label{def:fold-blowdown}  Let $M$
and $N$ be manifolds of dimension $n$ and let $f:M\to N$ be
$C^\infty$. Let $\Omega$ be a non-vanishing volume form on $N$ and define $\Sigma= \{\sigma \in M\st f^*\Omega(\sigma) = 0\}$, that is,  $\Sigma$ is the set of critical points of $f$. Note that, equivalently, $\Sigma$ is defined by the vanishing of the determinant of the Jacobian of $f$.
\begin{enumerate}[(a)]
\item If for all $\sigma\in\Sigma$, we have (i) the corank of $f$ at $\sigma$ is $1$, (ii) $\ker(\D f_\sigma)\cap T_\sigma\Sigma=\{0\}$, (iii) $f^*\Omega$ vanishes exactly to first order on $\Sigma$, then we say that $f$ is a {\em fold}.
\item If for all $\sigma\in\Sigma$, we have (i)  the rank of $f$ is constant; let us call this constant $k$, (ii) $\ker(\D f_\sigma)\subset T_\sigma\Sigma$, (iii) $f^*\Omega$ vanishes exactly to order $n-k$ on $\Sigma$, then we say that $f$ is a {\em blowdown}.
\end{enumerate}
\end{definition}

We now define $I^{p,l}$ classes. They were first introduced by Melrose
and Uhlmann \cite{MU}, Guillemin and Uhlmann \cite{Guillemin-Uhlmann} and Greenleaf
and Uhlmann \cite{GU1990a} and they were used in the context of radar
imaging in \cite{NC2004,RF1, RF2}.

\begin{definition} Two submanifolds  $M$ and $N$ intersect {\it cleanly} if $M \cap N$ is a smooth submanifold and $T(M \cap N)=TM \cap TN$.
\end{definition}

 Let us consider the following example

\begin{example}\label{ex:model}  Let
$\tilde{\Lambda}_0=\Delta_{T^*\rr^n}=\{ (x, \xi; x,
\xi)| x \in \rr^n, \ \xi \in \rr^n \setminus 0 \}$ be  the diagonal in
$T^*\rr^n \times T^*\rr^n$ and let  $\tilde{\Lambda}_1= \{ (x', x_n, \xi', 0; x',
y_n, \xi', 0)| x' \in \rr^{n-1}, \ \xi' \in \rr^{n-1} \setminus 0 \}$.
Then, $\tilde{\Lambda}_0$ intersects $ \tilde{\Lambda}_1$ cleanly in
codimension $1$.\end{example}

Now we define the class of product-type symbols $S^{p,l}(m,n,k)$.

\begin{definition}
$S^{p,l}(m,n,k)$ is the set of all functions $a(z,\xi,\sigma) \in
C^{\infty} ({\rr}^m \times {\rr}^n \times {\rr}^k )$ such that
for every $K \subset {\rr}^m$ and every $\alpha \in {\zz}^m_+,
\beta \in {\zz}^n_+, \gamma \in {\zz}^k_+$ there is $c_{K, \alpha,
\beta,\gamma}$ such
that
\[|\partial_z^{\alpha}\partial_{\xi}^{\beta}\partial_{\sigma}^{\gamma}
a(z,\xi,\sigma)| \le c_{K,\alpha,\beta,\gamma}(1+ |\xi|)^{p- |\beta|} (1+
|\sigma|)^{l-| \gamma|}, \forall (z,\xi,\tau) \in K \times {\rr}^n
\times {\rr}^k.\]
\end{definition}

Since any two sets of cleanly intersecting Lagrangians are equivalent
\cite{Guillemin-Uhlmann}, we first define $I^{p,l}$ classes for the
case in Example \ref{ex:model}.

\begin{definition}\cite{Guillemin-Uhlmann}    Let $I^{p,l}(\tilde{\Lambda}_0,
\tilde{\Lambda}_1)$  be the set of all distributions  $u$
such that $u=u_1 + u_2$  with $u_1 \in C^{\infty}_0$
 and $$u_2(x,y)=\int e^{i((x'-y')\cdot \xi'+(x_n-y_n-s) \cdot \xi_n+ s
\cdot \sigma)} a(x,y,s; \xi,\sigma)d\xi d\sigma ds$$  with $a \in
S^{p',l'}$  where $p'=p-\frac{n}{2}+\frac{1}{2}$  and
$l'=l-\frac{1}{2}$.
\end{definition}

Let $(\Lambda_0, \Lambda_1)$ be a pair of cleanly intersection
Lagrangians in codimension $1$ and let $\chi$ be a canonical
transformation which maps $(\Lambda_0, \Lambda_1)$ into
$(\tilde{\Lambda}_0, \tilde{\Lambda}_1)$ and maps $\Lambda_0\cap
\Lambda _1$ to $\tilde{\Lambda}_0\cap \tilde{\Lambda}_1$, where
$\tilde{\Lambda}_j$ are from Example \ref{ex:model}. Next we define
the $I^{p,l}(\Lambda_0, \Lambda_1)$.

\begin{definition}[\cite{Guillemin-Uhlmann}]\label{def:Ipl}
Let  $I^{p,l}( \Lambda_0, \Lambda_1)$
be the set of all distributions $u$ such that $ u=u_1 + u_2 + \sum
v_i$ where $u_1 \in I^{p+l}(\Lambda_0 \setminus \Lambda_1)$, $u_2 \in
I^{p}(\Lambda_1 \setminus \Lambda_0)$, the sum $\sum v_i$ is locally
finite and $v_i=Aw_i$ where $A$ is a zero order FIO associated to
$\chi ^{-1}$, the canonical transformation from above, and $w_i \in
I^{p,l}(\tilde {\Lambda}_0, \tilde{\Lambda}_1)$.

If $u$ is the Schwartz kernel of the linear operator $F$, then we say
$F\in I^{p,l}(\Lambda_0,\Lambda_1)$.
\end{definition}
This class of distributions is invariant under FIOs associated to
canonical transformations which map the pair $(\Lambda_0, \Lambda_1)$
to itself.   If
$F \in I^{p,l}(\Lambda_0, \Lambda_1)$ then $F \in I^{p+l}(\Lambda_0
\setminus \Lambda_1)$ and $F \in I^p(\Lambda_1 \setminus \Lambda_0)$
\cite{Guillemin-Uhlmann}.  Here by $F\in I^{p+l}(\Lambda_{0} \setminus
\Lambda_{1})$, we mean that the Schwartz kernel of $F$ belongs to
$I^{p+l}(\Lambda_{0}\setminus \Lambda_{1})$ microlocally away from
$\Lambda_{1}$. \\
One way to show that a distribution belongs to $I^{p,l}$ class is by
using the iterated regularity property:

\begin{proposition} \cite[Proposition 1.35]{GU1990a}\label{Ipl3}
If $u \in \Dc'(X \times Y)$ then $u \in I^{p,l} (\Lambda_0,
\Lambda_1)$ if there is an $s_0 \in R$ such that for all first order
pseudodifferential operators $P_i$ with principal symbols vanishing on
$ \Lambda_0 \cup \Lambda_1$, we have $P_1P_2 \dots P_r u \in
H^{s_0}_{loc}$.
\end{proposition}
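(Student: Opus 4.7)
The plan is to reduce to the model pair $(\tilde{\Lambda}_0,\tilde{\Lambda}_1)$ of Example \ref{ex:model} and then extract the product-type symbol estimates of Definition \ref{def:Ipl} from the iterated regularity hypothesis by Fourier analysis in the model.

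First I would invoke the invariance already built into Definition \ref{def:Ipl}. Given a clean codimension-one pair $(\Lambda_0,\Lambda_1)$, let $A$ be an elliptic zero-order FIO associated to the canonical transformation $\chi$ carrying $(\Lambda_0,\Lambda_1)$ to $(\tilde{\Lambda}_0,\tilde{\Lambda}_1)$. By Egorov's theorem, conjugation by $A$ sends first-order pseudodifferential operators to first-order pseudodifferential operators, preserves $H^{s_0}_{loc}$ regularity, and sends principal symbols vanishing on $\Lambda_0\cup\Lambda_1$ to principal symbols vanishing on $\tilde{\Lambda}_0\cup\tilde{\Lambda}_1$. Hence the iterated regularity hypothesis transfers between $u$ and $A^{-1}u$, and the defining invariance of $I^{p,l}$ under $A$ reduces the problem to the model case.

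Next I would microlocalize. On any conic open set disjoint from $\tilde{\Lambda}_0\cap\tilde{\Lambda}_1$ the two Lagrangians are disjoint, so one may construct pseudodifferential operators elliptic off one Lagrangian whose principal symbols vanish on the other; the iterated regularity hypothesis then reduces to the classical iterated-regularity characterization of Lagrangian distributions $I^m(\Lambda)$, placing $u$ microlocally in $I^{p+l}(\tilde{\Lambda}_0\setminus\tilde{\Lambda}_1)$ and $I^{p}(\tilde{\Lambda}_1\setminus\tilde{\Lambda}_0)$ with orders read off from $s_0$ and standard dimension counts. The essential work therefore concentrates near $\tilde{\Lambda}_0\cap\tilde{\Lambda}_1$. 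Using the oscillatory-integral representation of Definition \ref{def:Ipl}, in model coordinates the ideal of first-order principal symbols vanishing on $\tilde{\Lambda}_0\cup\tilde{\Lambda}_1$ is generated, modulo lower-order terms, by quantities that correspond after inverse Fourier transform either to differentiation in the $(\xi,\sigma)$ variables or to multiplication by $s$ and by components of $x-y$ in the amplitude, so each such operator lowers either the $\xi$-order or the $\sigma$-order of the amplitude by one.

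The main obstacle is to turn iterated $H^{s_0}_{loc}$ bounds into two-parameter symbol estimates in the $(\xi,\sigma)$ variables. I would proceed by a dyadic decomposition $a=\sum_{j,k}a_{j,k}$ with $a_{j,k}$ supported on $\{|\xi|\sim 2^j,\ |\sigma|\sim 2^k\}$: arbitrary products of the generators above act on each block with a uniform reduction in $j$ or $k$, the hypothesis $P_1\cdots P_r u \in H^{s_0}_{loc}$ together with Plancherel yields per-block $L^2$ bounds uniform in $r$, and summing over $(j,k)$ produces the product-type estimates
\[
|\partial^\beta_\xi\partial^\gamma_\sigma a(x,y,s;\xi,\sigma)| \leq C\,(1+|\xi|)^{p'-|\beta|}(1+|\sigma|)^{l'-|\gamma|},\qquad p'=p-\tfrac{n}{2}+\tfrac{1}{2},\quad l'=l-\tfrac{1}{2},
\]
required for $a\in S^{p',l'}$, where $p$ and $l$ are determined by $s_0$. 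This identifies $u$ as an element of $I^{p,l}(\tilde{\Lambda}_0,\tilde{\Lambda}_1)$, and via the reduction above as an element of $I^{p,l}(\Lambda_0,\Lambda_1)$.
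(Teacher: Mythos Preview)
The paper does not prove this proposition; it is quoted verbatim from \cite[Proposition 1.35]{GU1990a} and used as a black box (in the proof of Theorem \ref{ImagingSection:I(p,l) regularity}). There is therefore no argument in the paper against which to compare your sketch.

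For what it is worth, your outline is broadly the standard strategy and is in the spirit of the Greenleaf--Uhlmann argument: reduce to the model pair by an elliptic zero-order FIO and Egorov, handle the region away from $\tilde{\Lambda}_0\cap\tilde{\Lambda}_1$ by the classical single-Lagrangian iterated regularity characterization, and near the intersection convert the uniform $H^{s_0}_{loc}$ bounds into product-type symbol estimates via a dyadic decomposition in $(\xi,\sigma)$. Two places would need tightening if you were to write this out in full. First, your description of the generators of the ideal vanishing on $\tilde{\Lambda}_0\cup\tilde{\Lambda}_1$ is imprecise: in the model one has $x_j'-y_j'$, $\xi_j'-\eta_j'$, $\xi_n-\eta_n$, together with the mixed products $(x_n-y_n)\xi_n$ and $(x_n-y_n)\eta_n$, and it is exactly these mixed generators that force the genuinely two-parameter symbol behavior; your phrase ``differentiation in the $(\xi,\sigma)$ variables or multiplication by $s$ and by components of $x-y$'' blurs this. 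Second, the passage from iterated $H^{s_0}$ control to the $S^{p',l'}$ estimates is the technical heart of the matter: one must choose the $P_i$ adapted to the dyadic blocks so that each application gains a factor $2^{-j}$ or $2^{-k}$, and then sum; your sketch asserts this step rather than carrying it out.
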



\section{Analysis of the Operator $F$}\label{Section: Forward Operator}
In this Section, we prove Theorem \ref{FIOTheorem}, as  a result  of Lemma \ref{Lemma:FIO}
and Proposition \ref{Fold/blowdown proposition}.
\begin{lemma}\label{Lemma:FIO}
$F$ is an FIO of order $m$ with the canonical relation $C$  given by
\begin{align}\label{Common-midpoint Canonical Relation}
\notag C =\Bigg{\{}\Bigg{(}& s,t,-\omega\Big{(}\frac{x_1-s}{\sqrt{(x_1-s)^2+x_2^2+h^2}}-\frac{x_1+s}{\sqrt{(x_1+s)^2+x_2^2+h^2}}\Big{)}, -\omega;\\
&x_1,x_2,-\omega\Big{(}\frac{x_1-s}{\sqrt{(x_1-s)^2+x_2^2+h^2}}+\frac{x_1+s}{\sqrt{(x_1+s)^2+x_2^2+h^2}}\Big{)},\\
\notag &-\omega\Big{(}\frac{x_2}{\sqrt{(x_1-s)^2+x_2^2+h^2}}+\frac{x_2}{\sqrt{(x_1+s)^2+x_2^2+h^2}}\Big{)}\Bigg{)}\mbox{ with}\\
\notag &s>0, t=\sqrt{(x_1-s)^2+x_2^2+h^2}+ \sqrt{(x_1+s)^2+x_2^2+h^2},
x\in \rr^2\setminus \{0\}, \omega\neq 0\Bigg{\}}.
\end{align}
We note that $ (0,\infty)\times (\rtwo\setminus{0})\times (\rr\setminus 0)\ni
(s,x_{1},x_{2},\omega)\mapsto C$ is a global parametrization of $C$.
\end{lemma}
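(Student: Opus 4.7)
The plan is to verify that $F$ from \eqref{Modified Bi-static Mathematical Model} fits H\"ormander's standard FIO framework, then read $C$ off the critical set of the phase, and finally check the global parametrization. First I would verify that $\varphi(s,t,x,\omega) = \omega(t - R(s,x))$ from \eqref{def:vp} is a non-degenerate phase function on $Y \times X \times (\rr \setminus 0)$. Homogeneity of degree one in $\omega$ is immediate; since there is a single phase variable, non-degeneracy reduces to the condition that $d_{(s,t,x,\omega)}(\partial_\omega \varphi)$ does not vanish on $\{\partial_\omega \varphi = 0\}$. As $\partial_\omega \varphi = t - R(s,x)$ and $\partial_t(\partial_\omega \varphi) = 1$, this is automatic; moreover $d\varphi \ne 0$ everywhere on the critical set since $\partial_t\varphi = \omega \ne 0$. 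Combined with the symbol estimate \eqref{Amplitude Estimate}, which places $a \in S^{m+1/2}$, the standard order formula with $n_Y = n_X = 2$ and $N=1$ phase variable gives
\[
\operatorname{ord}(F) = (m + \tfrac{1}{2}) + \tfrac{N}{2} - \tfrac{n_Y + n_X}{4} = (m + \tfrac{1}{2}) + \tfrac{1}{2} - 1 = m.
\]

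Next I would compute the canonical relation from
\[
C = \{(y, -d_y \varphi;\, x,\, d_x\varphi) \st (y,x,\omega)\ \text{with}\ \partial_\omega \varphi = 0\},
\]
which is the form appropriate to the factor $e^{-\I\varphi}$ appearing in \eqref{Modified Bi-static Mathematical Model} (equivalently, one uses $e^{\I\tilde{\varphi}}$ with $\tilde{\varphi} = -\varphi$). The equation $\partial_\omega\varphi = 0$ is exactly $t = R(s,x)$; straightforward differentiation of $\omega(t - R)$ in $s, t, x_1, x_2$ and substitution of this relation then reproduces each component of $C$ displayed in the statement. The smooth cutoffs $f$ and $g$ from Section \ref{Section: Statements} do not affect the computation of $C$, since multiplication of $a$ by a smooth factor only restricts the support of points in $C$ without changing which $(y,\eta;x,\xi)$ arise.

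Finally, the map $(0,\infty) \times (\rr^2 \setminus 0) \times (\rr \setminus 0) \ni (s,x_1,x_2,\omega) \mapsto C$ is manifestly smooth; it is injective because $s, x_1, x_2$ are read off the base coordinates in $Y \times X$ while $\omega = -\tau_t$ is recovered from the $t$-component of the covector. The same observation shows the Jacobian of the map into $T^*Y \times T^*X$ has full rank four at every point, so the parametrization is a diffeomorphism onto $C$. The main obstacle in this lemma is purely notational bookkeeping: keeping signs consistent between the $e^{-\I\varphi}$ convention of the paper and the standard $e^{\I\varphi}$ convention under which the canonical-relation formula is usually stated, and confirming that the half-integer shift $m + \tfrac12$ in the symbol order produces FIO order exactly $m$. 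The genuinely nontrivial analysis (fold/blowdown structure and the rank-one drop of $\pi_L,\pi_R$ on $\Sigma_1 \cup \Sigma_2$) is deferred to Proposition \ref{Fold/blowdown proposition}.
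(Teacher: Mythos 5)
Your proposal follows essentially the same route as the paper's proof: verify the phase \eqref{def:vp} is non-degenerate, invoke the symbol estimate \eqref{Amplitude Estimate} and the standard order count $(m+\tfrac12)+\tfrac{N}{2}-\tfrac{n_X+n_Y}{4}=m$, read $C$ off H\"ormander's formula on $\{\partial_\omega\varphi=0\}$, and observe that $(s,x,\omega)$ parametrizes $C$ globally. The one point you under-check is the condition that makes $F$ an operator FIO rather than merely a Fourier integral distribution: besides $\partial_t\varphi=\omega\neq 0$ (the $T^*Y$ side), you must also verify $\partial_x\varphi\neq 0$ on the critical set, and this fails exactly at $x=0$, the point beneath the common midpoint --- which is precisely why the cutoff $g$ of \eqref{The function g(s,t)} excises a neighborhood of that point and why $x\in\rr^2\setminus\{0\}$ appears in \eqref{Common-midpoint Canonical Relation}; your appeal to ``$d\varphi\neq 0$ on the critical set'' covers only the weaker Lagrangian-distribution requirement. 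Everything else, including the sign bookkeeping for the $e^{-\I\varphi}$ convention and the rank-four argument for the global parametrization, is correct and matches the paper.
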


We will use the coordinates $(s,x,\omega)$ in this lemma from now on to
describe $C$ and subsets of $C$.

\bpr The phase function $\vp$ is non-degenerate with $\PD_{x}\vp$,
$\PD_{s,t}\vp$ nowhere $0$ whenever $\PD_\omega\vp=0$. We should mention that $\nabla \PD_{\omega}\vp \neq 0 $.
(Note that in order for $\PD_{x}\vp$ to be
nowhere $0$, we require exclusion of the common midpoint from our
analysis).  This observation is needed to show $F$ is a FIO
rather than just a Fourier integral distribution. Recalling that $a$
satisfies amplitude estimates (\ref{Amplitude Estimate}), we conclude
that $F$ is an FIO \cite{FT2}.  Also since $a$ is of order
$m+\frac{1}{2}$, the order of the FIO is $m$ \cite[Definition
3.2.2]{DuistermaatBook}.  By definition \cite[Equation
(3.1.2)]{Ho1971}
\[
C = \{((s,t,\PD_{s}\vp,\PD_{t}\vp);(x,-\PD_{x}\vp)) \st
\PD_{\omega}\vp=0\}.\] A calculation using this
definition establishes \eqref{Common-midpoint Canonical Relation}.
Furthermore, it is easy to see that $(s,x_{1},x_{2},\omega)$ is a global
parametrization of $\Lambda$.  \epr

\begin{remark} In the SAR application, $a$ has order 2 which makes
operator $F$ of order $\frac{3}{2}$. But from now on will consider
that $F$ has order $m$.\end{remark}

\begin{proposition} \label{Fold/blowdown proposition}
Denoting the restriction of the left and right projections to $C$ by $\pi_{L}$ and $\pi_{R}$ respectively, we have
\begin{enumerate}[(a)]
\item $\pi_{L}$ and $\pi_{R}$ drop rank by one on a set $\Sigma
=\Sigma_{1}\cup \Sigma_{2}$.
 Here we use the global
coordinates from Lemma \ref{Lemma:FIO}.  \item  $ \pi_L$ has  a fold singularity along $\Sigma$.  \item  $ \pi_{R} $ has  a blowdown singularity along $\Sigma$.
\end{enumerate}
\end{proposition}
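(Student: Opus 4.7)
In the global coordinates $(s,x_1,x_2,\omega)$ from Lemma~\ref{Lemma:FIO}, parts (a)--(c) reduce to analyzing the $4\times 4$ Jacobians $J_L,J_R$ of the projections. My first step would be to observe that $\pi_L$ carries $s$ and $-\omega$ unchanged, so $\det J_L$ collapses to a $2\times 2$ minor built from $\partial_{x_i} t$ and $\partial_{x_i}\tau_1$ ($i=1,2$), where $\tau_1=\partial_s\varphi$. Similarly the first two rows of $J_R$ pick off $x_1,x_2$ trivially, so $\det J_R$ reduces to a $2\times 2$ minor in $\partial_s\xi_j,\partial_\omega\xi_j$ ($j=1,2$).

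Writing $r_\pm=\sqrt{(x_1\pm s)^2+x_2^2+h^2}$ and using the identities $r_\pm^{2}-(x_1\mp s)^{2}=x_2^{2}+h^{2}$ and $r_+^{2}-r_-^{2}=4sx_1$, both $2\times 2$ determinants should simplify, up to an overall sign, to the same expression
\[
\det J_L \;=\; \pm\det J_R \;=\; \frac{c\,\omega\,s\,x_1\,x_2}{r_-^{3}r_+^{3}}\Bigl[\,r_- r_+ + (x_1^{2}-s^{2})+x_2^{2}+h^{2}\,\Bigr].
\]
The bracketed factor is strictly positive on the parameter domain, a consequence of the identity
$(r_- r_+)^{2}=\bigl((x_1^{2}-s^{2})+x_2^{2}+h^{2}\bigr)^{2}+4s^{2}(x_2^{2}+h^{2}),$
which gives $r_- r_+>\Lm(x_1^{2}-s^{2})+x_2^{2}+h^{2}\Rm$ (strict because $s,h>0$). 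Since $\omega\ne 0$ and $s>0$ on $C$, the critical set of each projection equals $\{x_1 x_2=0\}$; intersecting with the support forced by the cutoff $g$ (which excludes $x=0$) and with the $\ve'$-condition (which separates the two axes) gives the disjoint decomposition $\Sigma=\Sigma_1\cup\Sigma_2$ of part (a). Because the determinant is linear in $x_2$ along $\Sigma_1$ and linear in $x_1$ along $\Sigma_2$, it vanishes to exactly first order, which is simultaneously the order needed for the fold condition (iii) and for the blowdown condition (iii) with $n-k=1$.

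For parts (b) and (c) I would then read off the kernels directly at the singular points. On $\Sigma_1=\{x_2=0\}$, every entry of the $x_2$-column of $J_L$ carries an explicit factor of $x_2$, so $\ker d\pi_L=\langle\partial_{x_2}\rangle$; since $T\Sigma_1=\langle\partial_s,\partial_{x_1},\partial_\omega\rangle$, we have $\ker d\pi_L\cap T\Sigma_1=\{0\}$, which is the fold transversality. For $\pi_R$ on the same set, the first two rows of $J_R$ force $v_{x_1}=v_{x_2}=0$ on any kernel vector $v$, so $\ker d\pi_R\subset\langle\partial_s,\partial_\omega\rangle\subset T\Sigma_1$, which is the blowdown condition. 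The analysis on $\Sigma_2$ is identical after interchanging the roles of $x_1$ and $x_2$. The main obstacle I anticipate is the algebraic step: the two $2\times 2$ determinants look quite different initially and must be shown to collapse to the same clean product $c\,s\omega x_1 x_2\cdot[\,\text{positive}\,]$. In particular, the ``extra factor'' $r_- r_+ + (x_1^{2}-s^{2})+x_2^{2}+h^{2}$ looks as though it could cut out an additional rank-drop hypersurface, and it is precisely the square-root identity for $(r_- r_+)^{2}$ above that rules this out.
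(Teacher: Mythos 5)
Your proposal is correct and follows essentially the same route as the paper: the same explicit determinant $\det \D\pi_L=\det \D\pi_R=\frac{4x_1x_2s\omega}{A^2B^2}\bigl(1+\frac{x_1^2-s^2+x_2^2+h^2}{AB}\bigr)$, the same squaring identity (the paper's Lemma \ref{Non-vanishing Lemma}, reducing to $4s^2(x_2^2+h^2)>0$) to rule out extra rank-drop loci, and the same kernel computations ($\ker d\pi_L=\langle\partial_{x_2}\rangle$ transversal to $\Sigma_1$, $\ker d\pi_R\subset\langle\partial_s,\partial_\omega\rangle\subset T\Sigma_i$) for the fold/blowdown conclusions. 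The only cosmetic caveat is that the determinant is not literally linear in $x_2$ near $\Sigma_1$; what you need, and what the explicit $x_2$ prefactor gives via the product rule, is that $d(\det\D\pi_L)\neq 0$ on $\Sigma$, exactly as in the paper.
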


\begin{proof}
Let $A=\sqrt{(x_1-s)^2+x_2^2+h^2}$ and $B=\sqrt{(x_1+s)^2+x_2^2+h^2}$. We have
\[
\pi_L(x_1, x_2, s, \omega)=(s,A+B,-(\frac{x_1-s}{A}-\frac{x_1+s}{B} )\omega, -\omega)
\]
 and
\[{\D \pi_L}= \left(\begin{matrix}
0 & 0 & 1 & 0\\
\frac{x_1-s}{A} + \frac{x_1+s}{B}& \frac{x_2}{A} + \frac{x_2}{B} & \ast & 0 \\
-\omega(\frac{x_2^2+h^2}{A^3} - \frac{x_2^2+h^2}{B^3}) & \omega(\frac{(x_1-s)x_2}{A^3} - \frac{(x_1+s)x_2}{B^3}) & \ast & \ast\\
0 & 0 & 0 & -1
\end{matrix} \right)
\] and \bel{det:dpiL}\det \D \pi_L=\frac{4x_1x_2s\omega}{A^2B^2}
(1+\frac{(x_1^2-s^2+x_2^2+h^2}{AB})\ee
 We have that $s>0$ and the number in the parenthesis is a positive
 number by Lemma
\ref{Non-vanishing Lemma}  below.

Therefore, $\pi_L$ drops rank by one on $\Sigma=\Sigma_1 \cup
\Sigma_2$.  To show $d(\det(\D \pi_L))$ is nowhere zero on
$\Sigma$, one uses the product rule in \eqref{det:dpiL} and the fact
that the differential of $\frac{4x_1x_2s\omega}{A^2B^2}$ is never zero
on $\Sigma$ and the inequality in Lemma \ref{Non-vanishing Lemma}.

On $\Sigma_1$ the kernel of $\D \pi_L$ is $\frac{\partial}{\partial
x_2}$ which is transversal to $\Sigma_1$ and on $\Sigma_2$ the kernel
of $\D\pi_L$ is $\frac{\partial}{\partial x_1}$ which is transversal
to $\Sigma_2$. This means that $\pi_L$ has a fold singularity along  $\Sigma$.

 \par Similarly,
\[
\pi_R(x_1, x_2, s, \omega)=(x_1,x_2,-(\frac{x_1-s}{A}+\frac{x_1+s}{B} )\omega, -(\frac{x_2}{A}+\frac{x_2}{B}) \omega).
\]
Then
\[
{\D \pi_R}= \left(\begin{matrix}
1 & 0 & 0 & 0\\
0 & 1 & 0 & 0\\
\ast & \ast & \omega(\frac{x_2^2+h^2}{A^3} - \frac{x_2^2+h^2}{B^3})& -(\frac{x_1-s}{A} + \frac{x_1+s}{B})\\
\ast & \ast & -\omega(\frac{(x_1-s)x_2}{A^3} - \frac{(x_1+s)x_2}{B^3}) & -(\frac{x_2}{A} + \frac{x_2}{B})
\end{matrix} \right)
\]
has the same determinant so $\pi_R$ drops rank by one on $\Sigma$ and
the kernel of $\D\pi_R$ is a linear combination of
$\frac{\partial}{\partial \omega}$ and $\frac{\partial}{\partial s}$
which are tangent to both $\Sigma_1 $ and $\Sigma_2$.  This
means that $\pi_R$ has  a blowdown singularity along  $\Sigma$.

\end{proof}

\begin{lemma}\label{Non-vanishing Lemma}
For all $s\neq 0$,
\[
1+\frac{x_{1}^{2}-s^{2}+x_{2}^{2}+h^{2}}{|x-\g_{T}(s)||x-\g_{R}(s)|}>0.
\]
\end{lemma}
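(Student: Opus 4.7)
The plan is to reduce the inequality to an algebraic identity by clearing the denominator and squaring. Let $A=|x-\g_T(s)|=\sqrt{(x_1-s)^2+x_2^2+h^2}$ and $B=|x-\g_R(s)|=\sqrt{(x_1+s)^2+x_2^2+h^2}$, both of which are strictly positive because $h>0$. Since $AB>0$, the stated inequality is equivalent to
\[
AB \;>\; s^2-x_1^2-x_2^2-h^2.
\]
If the right hand side is non-positive, the inequality is immediate. Otherwise, set $w:=s^2-x_1^2-x_2^2-h^2>0$; it suffices to show $A^2B^2>w^2$.

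The main step is the algebraic identity
\[
A^2B^2 - w^2 \;=\; 4s^2(x_2^2+h^2).
\]
To verify it, I would let $a=x_1^2$, $b=s^2$, $c=x_2^2+h^2$, so that $A^2=a+b+c-2sx_1$ and $B^2=a+b+c+2sx_1$, hence
\[
A^2B^2 = (a+b+c)^2 - 4s^2 x_1^2 = (a+b+c)^2 - 4ab,
\]
while $w = b-a-c$. The difference $(a+b+c)^2-(b-a-c)^2$ factors as $(2b)(2a+2c)=4b(a+c)$, so
\[
A^2B^2 - w^2 = 4b(a+c) - 4ab = 4bc = 4s^2(x_2^2+h^2).
\]

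Because $s\neq 0$ and $h>0$, the right hand side is strictly positive, so $A^2B^2 > w^2 \geq 0$, hence $AB>|w|\geq w$, which is the desired inequality. There is no real obstacle here beyond bookkeeping the algebra; the only subtlety is the case split on the sign of the numerator, which is handled automatically by the identity since it gives $AB>|w|$, not merely $AB>w$.
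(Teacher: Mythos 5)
Your proposal is correct and follows essentially the same route as the paper: reduce to $A^2B^2>(x_1^2+x_2^2+h^2-s^2)^2$ and observe that the difference of the two sides equals $4s^2(x_2^2+h^2)>0$. The only difference is that you spell out the sign case (obtaining $AB>|w|$), which the paper's one-line "equivalently" implicitly absorbs.
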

\bpr
Equivalently, we show that $(|x-\g_{T}(s)||x-\g_{R}(s)|)^{2}>(x_{1}^{2}+x_{2}^{2}+h^{2}-s^{2})^{2}$. Expanding out both sides and simplifying, we obtain $4s^{2}(x_{2}^{2}+h^{2})>0$ which holds for $s\neq 0$, since $h>0$. Therefore the lemma is proved.
\epr

\section{Analysis of the normal operator $F^*F$} \label{Section: Analysis}

We have
\begin{align*}
F^{*}FV(x)=\int & e^{\I \omega(t-(|x-\g_{T}(s)|+|x-\g_{R}(s)|))-\wt{\omega}(t-(|y-\g_{T}(s)|+|y-\g_{R}(s)|))}\\
&\times \overline{a(s,t,x,\omega)}a(s,t,y,\wt{\omega})V(y)\D s \D t\D \omega\D \wt{\omega}\D y.
\end{align*}
After an application of the method of stationary phase in $t$ and
$\wt{\omega}$, the Schwartz kernel of this operator is
\Beq\label{Kernel of composed operator} K(x,y)=\int e^{\I \omega\lb
|y-\g_{T}(s)|+|y-\g_{R}(s)|-(|x-\g_{T}(s)|+|x-\g_{R}(s)|)\rb}\wt{a}(x,y,s,\omega)\:
\D s \D \omega.  \Eeq Note that $\wt{a}\in S^{2m+1}$ since we assume $a\in
S^{m+1/2}$.

Let the phase function of the kernel $K$ be denoted by
\Beq\label{Phase of kernel}
\Phi=\omega\lb  |y-\g_{T}(s)|+|y-\g_{R}(s)|-(|x-\g_{T}(s)|+|x-\g_{R}(s)|)\rb.
\Eeq
\begin{proposition}\label{Wavefront of composition} The wavefront set of the kernel $K$ of $F^{*}F$ satisfies,
\[
WF(K)'\subset \Delta \cup C_{1}\cup C_{2}\cup C_{3},
\]
where $\Delta$ is the diagonal  in $T^*X \times T^*X$ and the Lagrangians $C_{i}$ for $i=1,2,3$ are the graphs of the following functions $\chi_{i}$ for $i=1,2,3$ on $T^{*}X$:
 \[
 \chi_1(x,\xi)=(x_1, -x_2,  \xi_1, -\xi_2), \chi_{2}(x,\xi)=(-x_{1},x_{2},-\xi_{1},\xi_{2}) \mbox{ and } \chi_{3}=\chi_{1}\circ \chi_{2}.
 \]
 Furthermore we have:
 \begin{enumerate}[(a)]
 \item $\Delta$ and $C_{1}$, $\Delta$ and $C_{2}$, $C_{1}$ and $C_{3}$, $C_{2}$ and $C_{3}$  intersect cleanly in codimension 2.
  \item $\Delta\cap C_{3}=C_{1}\cap C_{2}=\emptyset$
  \end{enumerate}
 \end{proposition}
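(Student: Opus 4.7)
My plan is to establish the wavefront containment by analyzing the critical set of the phase $\Phi$ in the fiber variables $(s,\omega)$, and then to verify the intersection properties (a) and (b) by direct calculation on the explicit graphs.

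For the containment $WF(K)'\subset \Delta\cup C_1\cup C_2\cup C_3$, I would first write down the stationary conditions on $\Phi$. Setting $A_z=|z-\g_T(s)|$ and $B_z=|z-\g_R(s)|$, and noting that $\omega\neq 0$ and $x\neq 0$ are enforced by the support cutoff $g$, the equations $\partial_\omega\Phi=0$ and $\partial_s\Phi=0$ read
\[
A_x+B_x=A_y+B_y,\qquad \frac{s-x_1}{A_x}+\frac{s+x_1}{B_x}=\frac{s-y_1}{A_y}+\frac{s+y_1}{B_y}. \qquad(\star)
\]
Up to an overall sign, these are precisely the equations expressing that the $(t,\tau_s)$-components of $\pi_L(s,x,\omega)$ and $\pi_L(s,y,\omega)$ agree. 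The four points $y\in\{(\pm x_1,\pm x_2)\}$ solve $(\star)$ for any $s$, since both sides are invariant under $y_2\mapsto-y_2$ (which preserves $A_y,B_y$) and under $y_1\mapsto-y_1$ (which swaps $A_y\leftrightarrow B_y$).

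The heart of the argument is to show that these four reflections are the \emph{only} solutions. I would either quote the $4$-to-$1$ nature of $\pi_L$ recorded in the remark following Theorem \ref{FIOTheorem}, or prove it directly by an elementary monotonicity argument: for fixed $s>0$ the quantities $P:=A_y+B_y$ and $Q:=\partial_s(A_y+B_y)$ depend on $y$ only through $u:=y_1^2$ and $v:=y_2^2$, and using the identity $(A_y B_y)^2=(u+v+s^2+h^2)^2-4us^2$ a short manipulation yields
\[
Q=\frac{4sP(P^2-4u)}{P^4-16us^2},\qquad \partial_u Q\big|_{P}=\frac{-16sP^3(P^2-4s^2)}{(P^4-16us^2)^2}.
\]
Since $h>0$, the triangle inequality gives $P=A_y+B_y>|\g_T(s)-\g_R(s)|=2s$ (this is the content of Lemma \ref{Non-vanishing Lemma}), so $\partial_u Q|_P<0$ for $s>0$. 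Therefore $(P,Q)$ uniquely determines $(u,v)$, and $(\star)$ forces $(y_1^2,y_2^2)=(x_1^2,x_2^2)$. Having pinned $y$ down to the four reflections of $x$, a direct evaluation of $\xi=\partial_x\Phi$ and $\eta=\partial_y\Phi$ at the critical point gives $\eta=-\xi$ for $y=x$, $(\eta_1,\eta_2)=(-\xi_1,\xi_2)$ for $y=(x_1,-x_2)$, $(\eta_1,\eta_2)=(\xi_1,-\xi_2)$ for $y=(-x_1,x_2)$, and $\eta=\xi$ for $y=(-x_1,-x_2)$, placing $(x,\xi;y,-\eta)$ on $\Delta$, $C_1$, $C_2$, $C_3$ respectively.

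Properties (a) and (b) follow from straightforward calculation on the explicit graphs. For $\Delta\cap C_1$, the constraints $y=x,\ \xi=\eta$ together with $(x_1,x_2,\xi_1,\xi_2)=(y_1,-y_2,\eta_1,-\eta_2)$ collapse to $x_2=\xi_2=0$, giving a smooth $2$-dimensional submanifold parametrized by $(x_1,\xi_1)$; this is codimension $2$ in each of the $4$-dimensional $\Delta$ and $C_1$, and cleanness holds because the four defining equations in the $(x_2,y_2,\xi_2,\eta_2)$ coordinates are transverse. The remaining pairs $\Delta\cap C_2$, $C_1\cap C_3$, $C_2\cap C_3$ are handled identically by symmetry of the $\chi_i$'s. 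For (b), $\Delta\cap C_3$ would require $(x,\xi)=(-x_1,-x_2,-\xi_1,-\xi_2)$, forcing $x=0$ and $\xi=0$, which is excluded; similarly $C_1\cap C_2$ would require $\chi_1(y,\eta)=\chi_2(y,\eta)$, forcing $y=\eta=0$. The main obstacle in the whole argument is the uniqueness step in the second paragraph, namely ruling out exotic solutions of $(\star)$ beyond the four reflections; this ultimately rests on the strict inequality $P>2s$ supplied by Lemma \ref{Non-vanishing Lemma}.
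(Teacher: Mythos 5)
Your proposal is correct and follows the same overall skeleton as the paper (stationary conditions in $(s,\omega)$, identification of the solution set as the four reflections, evaluation of the covectors on each branch, then inspection of the linear graphs for (a) and (b)), but the decisive step --- showing that the isorange/isodoppler system \eqref{Isorange curves}--\eqref{Isodoppler curves} admits \emph{only} the four reflected solutions --- is carried out by a genuinely different computation. The paper passes to the prolate spheroidal coordinates \eqref{Prolate coordinate system}, in which the confocal ellipses and hyperbolae become coordinate surfaces: the isorange equation collapses to $\cosh\rho=\cosh\rho'$ and the isodoppler equation to $(\cosh^2\rho-1)(\sin^2\phi-\sin^2\phi')=0$, so the system decouples and the branches $\phi'=\pm\phi,\ \pi\pm\phi$ appear immediately. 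You instead stay in Cartesian variables, note that $P=A_y+B_y$ and $Q=\partial_s(A_y+B_y)$ depend on $y$ only through $(u,v)=(y_1^2,y_2^2)$, and prove injectivity of $(u,v)\mapsto(P,Q)$ by the monotonicity $\partial_uQ\big|_P<0$; I verified your two displayed formulas (they follow from $A_yB_y=(P^4-16s^2u)/(4P^2)$), and the subsequent recovery of $v$ from $(P,u)$ closes the argument. Both routes hinge on the same positivity: your $P>2s$ is exactly $\cosh\rho>1$, i.e.\ the nonvanishing of $\cosh^2\rho-1$ in the paper and the strict inequality of Lemma \ref{Non-vanishing Lemma}, all coming from $h>0$. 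Your version buys a self-contained, elementary uniqueness proof that avoids introducing and then restricting a three-dimensional coordinate system; the paper's version buys reusable structure, since the prolate coordinates are needed again in Appendix \ref{Appendix:RegularityFormulas} to derive the identities \eqref{Identity 1}--\eqref{Identity 6}. Your treatment of (a) and (b) agrees with the paper's; since $\Delta$ and the $C_i$ are linear, cleanness of the pairwise intersections is indeed immediate once the intersections are computed.
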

\bpr
In order to find the wavefront set of the kernel $K$, we consider the canonical relation $C^t \circ C$ of $F^*F$:  $C^t \circ C= \{ (x, \xi; y, \eta) | (x, \xi; s, t, \sigma, \tau) \in C^t; (s,t, \sigma, \tau; y , \eta) \in C  \}$. We have that $(s, t, \sigma, \tau; y, \eta) \in C$ implies
\begin{align}\label{Canonical relation of F}
\notag&t=\sqrt{(y_1-s)^2+y_2^2+h^2}+ \sqrt{(y_1+s)^2+y_2^2+h^2}\\
\notag&\sigma= \tau\Big{(}\frac{y_1-s}{\sqrt{(y_1-s)^2+y_2^2+h^2}}-\frac{y_1+s}{\sqrt{(y_1+s)^2+y_2^2+h^2}}\Big{)}\\
&\eta_1=\tau \Big{(}\frac{y_1-s}{\sqrt{(y_1-s)^2+y_2^2+h^2}}+\frac{y_1+s}{\sqrt{(y_1+s)^2+y_2^2+h^2}}\Big{)}\\
\notag&\eta_2=\tau \Big{(}\frac{y_2}{\sqrt{(y_1-s)^2+y_2^2+h^2}}+\frac{y_2}{\sqrt{(y_1+s)^2+y_2^2+h^2}}\Big{)}
\end{align}
and
$(x, \xi; s, t, \sigma, \tau) \in C^t$ implies
\begin{align}\label{Canonical relation of F*}
\notag&t=\sqrt{(x_1-s)^2+x_2^2+h^2}+ \sqrt{(x_1+s)^2+x_2^2+h^2}\\
\notag&\sigma= \tau(\frac{x_1-s}{\sqrt{(x_1-s)^2+x_2^2+h^2}}-\frac{x_1+s}{\sqrt{(x_1+s)^2+x_2^2+h^2}})\\
&\xi_1=\tau (\frac{x_1-s}{\sqrt{(x_1-s)^2+x_2^2+h^2}}+\frac{x_1+s}{\sqrt{(x_1+s)^2+x_2^2+h^2}})\\
\notag&\xi_2=\tau (\frac{x_2}{\sqrt{(x_1-s)^2+x_2^2+h^2}}+\frac{x_2}{\sqrt{(x_1+s)^2+x_2^2+h^2}})
\end{align}
From the first two relations in \eqref{Canonical relation of F} and \eqref{Canonical relation of F*}, we have
\Beq\label{Isorange curves}
\sqrt{(y_1-s)^2+y_2^2+h^2}+ \sqrt{(y_1+s)^2+y_2^2+h^2}=\sqrt{(x_1-s)^2+x_2^2+h^2}+ \sqrt{(x_1+s)^2+x_2^2+h^2}
\Eeq
and
\Beq\label{Isodoppler curves}
\frac{y_1-s}{\sqrt{(y_1-s)^2+y_2^2+h^2}}-\frac{y_1+s}{\sqrt{(y_1+s)^2+y_2^2+h^2}}=
\frac{x_1-s}{\sqrt{(x_1-s)^2+x_2^2+h^2}}-\frac{x_1+s}{\sqrt{(x_1+s)^2+x_2^2+h^2}}.
\Eeq We will use the prolate spheroidal coordinates to solve for $x$ and $y$. We let
\begin{align}\label{Prolate coordinate system}
\begin{array}{ll}
x_{1}=s\cosh \rho \cos \phi & y_{1}=s\cosh \rho' \cos \phi'\\
x_{2}=s\sinh \rho \sin \phi \cos \theta & y_{2}=s\sinh \rho' \sin \phi' \cos \theta'\\
x_{3}=h+ s\sinh \rho \sin \phi \sin \theta & y_{3}=h+ s\sinh \rho' \sin \phi' \sin \theta'
\end{array}
\end{align}
with $\rho > 0$, $0 \leq \phi \leq \pi$ and $0 \leq \theta < 2 \pi$.

In this case $x_3=0$ and we use it to solve for $h$. Hence $$(x_1-s)^2+x_2^2+h^2=s^2(\cosh \rho- \cos \phi)^2$$ and $$(x_1+s)^2+x_2^2+h^2=s^2(\cosh \rho+ \cos \phi)^2$$
Noting that $s>0$ and $\cosh\rho\pm\cos\phi>0$, the first relation given by \eqref{Isorange curves} in these coordinates becomes
$$s(\cosh \rho- \cos \phi)+s(\cosh \rho+ \cos \phi)=s(\cosh \rho'- \cos \phi')+s(\cosh \rho'+ \cos \phi') $$ from which we get $$\cosh \rho=\cosh \rho' \Rightarrow \rho=\rho'$$

The second relation given by \eqref{Isodoppler curves} becomes
$$\frac{\cosh \rho \cos \phi -1}{\cosh \rho-\cos \phi} - \frac{\cosh \rho \cos \phi +1}{\cosh \rho+\cos \phi}=\frac{\cosh \rho \cos \phi' -1}{\cosh \rho-\cos \phi'}- \frac{\cosh \rho \cos \phi' +1}{\cosh \rho+\cos \phi'} $$
After simplification we get
$$ \frac{\sin^2 \phi}{\cosh^2\rho-\cos^2 \phi} =\frac{\sin^2 \phi'}{\cosh^2\rho-\cos^2 \phi'}$$ which implies
$$(\cosh^2 \rho-1)(\sin^2 \phi-\sin^2\phi')=0 $$ Thus $\sin \phi= \pm \sin \phi' \Rightarrow \phi = \pm \phi', \pi \pm \phi'$.
\par We remark that $\cos \theta= \pm \sqrt{1-\frac{h^2}{s^2\sinh^2 \rho \sin^2 \phi}} =\pm \cos \theta'$ and note that $x_3=0$ implies that $\sin(\phi)\neq 0$, so that division by $\sin(\phi)$ is allowed here. We also remark that it is enough to consider  $\cos \theta=\cos \theta'$ as no additional relations are introduced by considering $\cos\theta=-\cos\theta'$.

Now we go back to $x$ and $y$ coordinates.  \par If $\phi'=\phi$ then
$x_1=y_1, \ x_2=y_2, \xi_i=\eta_i$ for $i=1,2$. For these points, the
composition, $C^t \circ C \subset \Delta=\{ (x, \xi; x, \xi) \} $.
\par If $\phi'=-\phi$ then $x_1=y_1, \ -x_2=y_2, \xi_1=\eta_1,
-\xi_2=\eta_2$. For these points, the composition, $C^t \circ C$ is a
subset of $ C_1=\{(x_1, x_2, \xi_1, \xi_2; x_1, -x_2, \xi_1, -\xi_2)
\} $ which is the graph of $\chi_1(x,\xi)=(x_1, -x_2, \xi_1, -\xi_2)$.
This in the base space represents the reflection about the $x_1$ axis.
\par If $\phi'=\pi-\phi$ then $-x_1=y_1, \ x_2=y_2, -\xi_1=\eta_1,
\xi_2=\eta_2$. For these points, the composition $C^t \circ C$ is a
subset of $ C_2=\{(x_1, x_2, \xi_1, \xi_2; -x_1, x_2, -\xi_1, \xi_2)
\} $ which is the graph of $\chi_2(x,\xi)=(-x_1, x_2, -\xi_1, \xi_2)$.
This in the base space represents the reflection about the $x_2$ axis.
\par If $\phi'=\pi+\phi$ then $-x_1=y_1, \ -x_2=y_2, -\xi_1=\eta_1,
-\xi_2=\eta_2$. For these points, the composition, $C^t \circ C$ is a
subset of $ C_3=\{(x_1, x_2, \xi_1, \xi_2; -x_1, -x_2, -\xi_1, -\xi_2)
\} $ which is the graph of $\chi_3(x,\xi)=(-x_1, -x_2, -\xi_1,
-\xi_2)$. This in the base space represents the reflection about the
origin.  \par Notice that $\chi_1 \circ \chi_1=\mbox{Id}, \ \chi_2
\circ \chi_2=\mbox{Id},\ \chi_1 \circ \chi_2=\chi_3$.  \par So far we
have obtained that $C^t \circ C \subset \Delta \cup C_1 \cup C_2 \cup
C_3$.

Next we consider the intersections of any two of these Lagrangians. We have
\par $\Delta$ intersects $C_1$ cleanly in codimension 2, $\Delta \cap C_1=\{(x, \xi; y, \eta)| x_{2}=0=\xi_{2} \} $ 
\par $\Delta$ intersects $C_2$ cleanly in codimension 2, $\Delta \cap C_2=\{(x, \xi; y, \eta)| x_1=0=\xi_1 \} $ 
\par $C_1$ intersects $C_3$ cleanly in codimension 2, $C_1 \cap C_3=\{(x, \xi; y, \eta)| x_1=0=\xi_1 \}$.
\par $C_2$ intersects $C_3$ cleanly in codimension 2, $C_2 \cap C_3=\{(x, \xi; y, \eta)| x_2=0=\xi_2 \}$.
\par $\Delta \cap C_3=\emptyset=C_1 \cap C_2$.
\epr

\begin{theorem}\label{theorem-decomposition} Let $F$ be as in \eqref{Modified Bi-static Mathematical Model} with order $m$. Then $F^*F$ can be decomposed as a sum of operators belonging in $I^{2m,0}(\Delta,C_1)+I^{2m,0}(\Delta, C_2)+ I^{2m,0}(C_{1},C_3)+I^{2m,0}(C_{2},C_{3})$.
\end{theorem}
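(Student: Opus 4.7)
The plan is to combine Proposition~\ref{Wavefront of composition} with the iterated-regularity characterization of $I^{p,l}$ classes (Proposition~\ref{Ipl3}). First, by Proposition~\ref{Wavefront of composition}, the wavefront set of the Schwartz kernel $K$ satisfies $WF(K)'\subset\Delta\cup C_1\cup C_2\cup C_3$, with the only nonempty pairwise intersections being the four pairs $(\Delta,C_1),(\Delta,C_2),(C_1,C_3),(C_2,C_3)$, each clean in codimension $2$, while $\Delta\cap C_3=\emptyset$ and $C_1\cap C_2=\emptyset$. Exploiting these two disjointness relations, I would choose a microlocal partition of unity $1=\psi_{\Delta C_1}+\psi_{\Delta C_2}+\psi_{C_1 C_3}+\psi_{C_2 C_3}$ on a conic neighborhood of $\Delta\cup C_1\cup C_2\cup C_3$, arranged so that the support of $\psi_{\Lambda_0\Lambda_1}$ is disjoint from whichever two of the four Lagrangians do not appear in the pair $(\Lambda_0,\Lambda_1)$. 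The fact that $\Delta\cap C_3=C_1\cap C_2=\emptyset$ is precisely what makes this separation possible after shrinking conic neighborhoods. Quantizing the $\psi$'s as pseudodifferential operators and applying them to $K$ modulo smoothing then yields a decomposition $K\equiv K_{\Delta C_1}+K_{\Delta C_2}+K_{C_1 C_3}+K_{C_2 C_3}$ in which each summand has wavefront set contained in a single pair of cleanly intersecting Lagrangians from the statement.

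Next, for each summand $K_{\Lambda_0\Lambda_1}$ I would verify membership in $I^{2m,0}(\Lambda_0,\Lambda_1)$ using Proposition~\ref{Ipl3}. This requires exhibiting a family of first-order pseudodifferential operators with principal symbols vanishing on $\Lambda_0\cup\Lambda_1$ and checking that any finite product of them applied to $K_{\Lambda_0\Lambda_1}$ lies in $H^{s_0}_{\mathrm{loc}}$ for a fixed $s_0$. In the base variables $u_{\pm}=x_1\pm y_1$ and $v_{\pm}=x_2\pm y_2$, the four Lagrangians are cut out by $\Delta=\{u_-=v_-=0\}$, $C_1=\{u_-=v_+=0\}$, $C_2=\{u_+=v_-=0\}$ and $C_3=\{u_+=v_+=0\}$ together with the corresponding fiber constraints; for the pair $(\Delta,C_1)$ one can take operators with principal symbols proportional to $u_-|\xi|$, $v_-v_+|\xi|$, $\xi_1-\eta_1$, and $(\xi_2-\eta_2)(\xi_2+\eta_2)/|\xi|$. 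Applying each such operator to the oscillatory integral representation~\eqref{Kernel of composed operator} amounts to multiplying the amplitude $\wt{a}\in S^{2m+1}$ by a factor vanishing on the critical set identified in the proof of Proposition~\ref{Wavefront of composition}; integration by parts in $(s,\omega)$ then gains one power of decay in $|\omega|$. Iterating $r$ times gives arbitrarily large decay and hence $H^{s_0}_{\mathrm{loc}}$ regularity.

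The indices $(p,l)=(2m,0)$ emerge from standard $I^{p,l}$ accounting: since $\wt{a}\in S^{2m+1}$ and the integration in~\eqref{Kernel of composed operator} is over the two-dimensional space $(s,\omega)$, microlocally away from the intersection each piece $K_{\Lambda_0\Lambda_1}$ is a classical Fourier integral operator of order $2m$ associated to the principal Lagrangian. The codimension-$2$ clean intersection then contributes $l=0$, reflecting no additional principal-symbol enhancement on the secondary Lagrangian beyond the order $p=2m$. This matches the heuristic that $F$ has order $m$, so the normal operator $F^{*}F$ has order $2m$ on the diagonal, and the three ``artifact'' Lagrangians $C_1,C_2,C_3$ carry singularities of the same strength as the one on $\Delta$.

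The main obstacle I expect is the construction and verification of the iterated regularity itself: one must produce enough first-order pseudodifferential operators whose symbols vanish on both Lagrangians of a given pair to generate the full module of such symbols, and one must control the composition of products of them uniformly. The codimension-$2$ character of each intersection, in contrast with the codimension-$1$ model of Example~\ref{ex:model}, requires two independent defining symbols per Lagrangian, so the bookkeeping is more involved than in the fold/blowdown $I^{p,l}$ analyses of prior work such as \cite{NC2004,RF1,RF2,Krishnan-Quinto}. Fortunately, the $\Zb_2\times\Zb_2$ reflection symmetry of the canonical relation $C$, coming from the invariance of the common midpoint geometry under $x_1\mapsto -x_1$ and $x_2\mapsto -x_2$, permutes the four Lagrangians (and the four pairs) in a controlled way, so that once the pair $(\Delta,C_1)$ is handled explicitly the remaining three cases follow by direct adaptation.
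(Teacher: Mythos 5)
Your high-level strategy---split $F^*F$ into four pieces, one per clean pair, then verify each piece by iterated regularity and fix $(p,l)=(2m,0)$ from the orders away from the intersections---is the same as the paper's, but two of your steps have genuine gaps. First, the decomposition. You microlocalize the kernel $K$ of $F^*F$ with pseudodifferential cutoffs; the paper instead cuts $F$ itself spatially ($F=F_0+F_1+F_2+F_3$ via $\psi_1,\psi_2$ supported near the coordinate axes, and then $F_1=F_1^++F_1^-$ according to the sign of $x_1$), shows $F_0$, $F_1^*F_2$, $F_2^*F_1$ are smoothing (Lemma \ref{ImagingSection:Lemma F0}), handles $F_j^*F_3$ and $F_3^*F$ by the transverse calculus (Lemma \ref{ImagingSection:Lemma F1}), and reserves iterated regularity for $F_1^*F_1$ and $F_2^*F_2$. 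The spatial decomposition is not cosmetic: it keeps every summand in the explicit oscillatory-integral form \eqref{Kernel of composed operator}, which is what the integration by parts in $(s,\omega)$ acts on. With your $\Psi$DO cutoffs, $P_1\cdots P_r(\Psi K)$ produces commutators $[P_i,\Psi]$ whose symbols need not vanish on $\Lambda_0\cup\Lambda_1$, and these must be controlled before Proposition \ref{Ipl3} can be invoked; you do not address this.

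Second, and more seriously, the step ``multiplying the amplitude by a factor vanishing on the critical set; integration by parts then gains one power of decay'' assumes exactly what must be proved: that each generator of the ideal of $\Lambda_0\cup\Lambda_1$, pulled back under the parametrization, lies in the ideal generated by $\PD_s\Phi$ and $\PD_\omega\Phi$ with smooth coefficients of the correct homogeneities. This is the content of \eqref{Identity 1}--\eqref{Identity 6} and occupies all of Appendix \ref{Appendix:RegularityFormulas}; moreover the coefficients there (note the division by $\cos\phi+\cos\phi'$ in \eqref{Diff. of cosines}) are smooth only where $x_1$ and $y_1$ have the same sign, which is precisely why the paper needs the further splitting into $F_1^{\pm}$ and disposes of the cross terms $(F_1^-)^*F_1^+$ by conjugating with the reflection operator $R$ (Proposition \ref{I(p,l) regularity of cross terms}) rather than by a direct computation. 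Two smaller points: your generator list for $(\Delta,C_1)$ does not generate the ideal---$(x_2-y_2)(\xi_2+\eta_2)$ vanishes on $\Delta\cup C_1$ but is not a smooth combination of $x_1-y_1$, $x_2^2-y_2^2$, $\xi_1-\eta_1$, $\xi_2^2-\eta_2^2$ (evaluate at $x_2=-y_2\neq0$, $\xi_2=\eta_2\neq 0$, where all four vanish but it does not); the paper's list therefore includes the mixed generators $(x_2+y_2)(\xi_2-\eta_2)$ and $(x_2-y_2)(\xi_2+\eta_2)$. Finally, the mechanism is not that each iteration ``gains one power of decay'' (that would make $K$ smooth); it is that each $P_i$ raises the amplitude order by one and the integration by parts lowers it back to $2m+1$, so that $P_1\cdots P_rK$ remains in a fixed $H^{s_0}_{\mathrm{loc}}$ independently of $r$.
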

\begin{proof}

Recall from Theorem \ref{FIOTheorem}, that the canonical relation of $F$ drops rank on the union of two sets, $\Sigma_{1}$ and $\Sigma_{2}$. Accordingly, we decompose $F$ into components such that the canonical relation of each component is either supported near a subset of the union of these two sets, one of these two sets or away from both these sets. More precisely, we let $\psi_{1}$ and $\psi_{2}$ be two infinitely differentiable functions defined as follows (refer Figure \ref{Cutoff Figure}):
\[
\psi_1(x)=\Bigg{\{} \begin{array}{cc} 1, & \mbox{ on }\  \{(x_{1},x_{2}): |x_2| < \epsilon  \} \\
0, & \mbox{ on } \ \{(x_{1},x_{2}): |x_2| > 2 \epsilon \}
\end{array}
\mbox{ and } \psi_2(x)=\Bigg{\{} \begin{array}{cc} 1, & \mbox{ on } \  \{(x_{1},x_{2}): |x_1| < \epsilon \} \\
0, & \mbox{ on } \ \{(x_{1},x_{2}): |x_1| > 2 \epsilon \}
\end{array}.
\]

\begin{figure}[htbp]
\centering
\includegraphics[height=10cm, width=10cm]{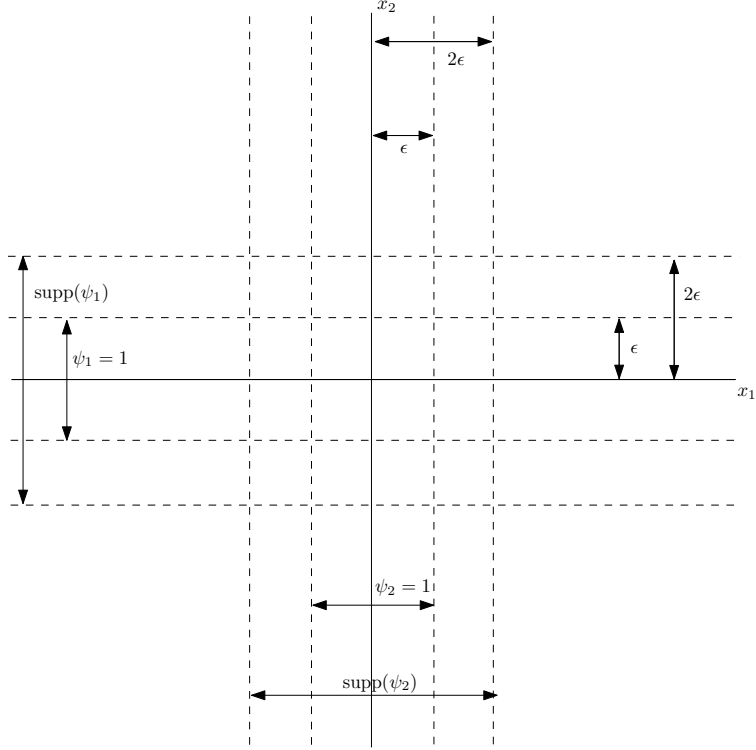}\caption{Support of
  the cutoff functions $\psi_{1}$ and $\psi_{2}$.}
\label{Cutoff Figure}
\end{figure}

Then we write $F=F_0+F_1+F_2+F_3$ where $F_i$  are given in terms of their kernels
\begin{align*}
&K_{F_0}=\int e^{-i\varphi} a \psi_1 \psi_2 \D \omega,\quad K_{F_1}=\int e^{-i\varphi} a \psi_1 (1-\psi_2) \D \omega, \\
&K_{F_2}=\int e^{-i\varphi} a(1-\psi_1) \psi_2 \D \omega,\quad K_{F_3}=\int e^{-i\varphi} a (1-\psi_1) (1-\psi_2) \D \omega,
\end{align*} where $\vp$ is the phase function of $F$ in \eqref{Modified Bi-static Mathematical Model}.
Now we consider $F^*F$, which using the decomposition of $F$ as above can be written as
\Beq\label{F*F Decomposition}
F^*F =F^*_0 F+(F_{1}+F_{2})^{*}F_{0}+ F^*_1F_1+ F^*_2F_2+F^*_1F_2+F^*_2F_1+F^*_1F_3+F^*_2F_3 +F^*_3F
\Eeq
The theorem now follows from Lemmas \ref{ImagingSection:Lemma F0}, \ref{ImagingSection:Lemma F1} and Theorem \ref{ImagingSection:I(p,l) regularity} below, where we analyze each of the compositions above.
\epr

\begin{lemma}\label{ImagingSection:Lemma F0}
$F_{0}$, $F_{1}^{*}F_{2}$ and $F_{2}^{*}F_{1}$ are smoothing operators.
\end{lemma}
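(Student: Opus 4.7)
The plan is to show in each case that the set of critical points of the phase (in the fiber variables) is disjoint from the support of the amplitude; by standard non-stationary phase this forces the Schwartz kernel to be $C^{\infty}$.

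The quantitative ingredient used throughout is the Taylor estimate
\[
0 \leq A_x+B_x-2\sqrt{s^2+h^2} = \frac{h^2 x_1^2 + (s^2+h^2)x_2^2}{(s^2+h^2)^{3/2}} + O\!\left(\frac{|x|^4}{(s^2+h^2)^{3/2}}\right),
\]
where $A_x=\sqrt{(x_1-s)^2+x_2^2+h^2}$ and $B_x=\sqrt{(x_1+s)^2+x_2^2+h^2}$. For $|x_1|, |x_2| \leq 2\epsilon$ the right-hand side stays strictly below $20\epsilon^2/h$ uniformly in $s>0$; this is the motivation for the constant appearing in \eqref{The function g(s,t)}, and I would invoke it from Appendix \ref{Factor in g(s,t)} as a black box.

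For $F_0$, the amplitude is supported where $|x_1|, |x_2|\leq 2\epsilon$ (from $\psi_1\psi_2$) and $|t-2\sqrt{s^2+h^2}|\geq 20\epsilon^2/h$ (from $g$). Thus $|\partial_\omega \varphi| = |t - A_x - B_x| > 0$ uniformly on the support, and iterated integration by parts in $\omega$ concludes.

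For $F_1^*F_2$, the Schwartz kernel is an oscillatory integral $\int e^{i\Phi}A\, ds\, dt\, d\omega\, d\tilde\omega$ with $\Phi = \omega(t-A_x-B_x)-\tilde\omega(t-A_y-B_y)$ and $A$ supported where $|x_1|>\epsilon$, $|x_2|<2\epsilon$, $|y_1|<2\epsilon$, $|y_2|>\epsilon$, and $g(s,t)^2\neq 0$. Setting the partials of $\Phi$ in $(s,t,\omega,\tilde\omega)$ to zero yields $\omega=\tilde\omega$, the isorange relation $t = A_x + B_x = A_y + B_y$, and the isodoppler relation; by the case analysis inside the proof of Proposition \ref{Wavefront of composition}, these together force $y\in\{(\pm x_1, \pm x_2)\}$. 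Intersecting with the two cutoff supports confines $(x,y)$ to the small corner $\epsilon<|x_1|,|x_2|<2\epsilon$, on which the Taylor bound above gives $A_x+B_x-2\sqrt{s^2+h^2}<20\epsilon^2/h$, contradicting $|t-2\sqrt{s^2+h^2}|\geq 20\epsilon^2/h$ imposed by $g$. Hence the critical set of $\Phi$ is disjoint from the support of $A$, and non-stationary phase gives smoothness of the kernel. The operator $F_2^*F_1$ is treated identically after interchanging $\psi_1$ and $\psi_2$. The main delicate point is precisely the uniform Taylor estimate above.
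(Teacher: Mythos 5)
Your proposal is correct and follows essentially the same route as the paper: both arguments rest on the observation that on the support of $\psi_1\psi_2$ (resp.\ on the corner squares forced by combining the cutoffs with the relations $|x_i|=|y_i|$ from Proposition \ref{Wavefront of composition}) one has $0\le A_x+B_x-2\sqrt{s^2+h^2}<20\epsilon^2/h$ (Appendix \ref{Factor in g(s,t)}), which together with the support condition $|t-2\sqrt{s^2+h^2}|\ge 20\epsilon^2/h$ coming from $g$ makes the phase non-stationary, so integration by parts gives smoothness. The paper merely organizes this as an explicit three-case split in $(s,t)$ and a localization of $K_{F_1}$ near the corner rectangles, whereas you package it as a single uniform lower bound plus emptiness of the composed critical set; the content is the same.
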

\bpr We will only prove that $F_{0}$ and $F_{1}^{*}F_{2}$ are
smoothing. The proof for $F_{2}^{*}F_{1}$ is similar to that of
$F_{1}^{*}F_{2}$.  Let $ \wt{\vp}=\frac{1}{\omega}\vp$, where $\vp$ is the
phase function in \eqref{Modified Bi-static Mathematical Model}.

For $\delta=18\epsilon^{2}/h$, we analyze $F_{0}$ according to the following cases:

\begin{enumerate}[(a)]

\item\label{<} $\{(s,t):s>0, 0<t<2\sqrt{s^{2}+h^{2}}-\delta\}$.

For this case, we show that $K_{F_{0}}$ is smoothing. For, on
$\{(s,t): s>0, t<2\sqrt{s^{2}+h^{2}}-\delta\}$, $\wt{\vp}$ is bounded
away from $0$ and hence is a smooth function. Therefore for any $m\geq
0$
\[
\lb\frac{\I}{\wt{\vp}}\rb^{m}K_{F_{0}}(s,t,x)=\int \PD_{\omega}^{m}\lb e^{-\I \omega{\wt{\vp}}(s,t,x)}\rb
\psi_{1}(x)\psi_{2}(x)a(s,t,x,\omega) \D \omega.
\]
Now by integration by parts, the order of the amplitude can be made smaller than any negative number. Therefore $K_{F_{0}}$ is smoothing.
\item\label{=} $\{(s,t):s>0, |t-2\sqrt{s^{2}+h^{2}}|\leq\delta\}$.

For $(s,t)$ in this set, the kernel $K_{F_{0}}$ is identically $0$ due to our choice of the function $g(s,t)$ in \eqref{The function g(s,t)}.
\item\label{>} $\{(s,t):s>0, t> 2\sqrt{s^{2}+h^{2}}+\delta\}$.

In this case, we have that depending on our choice of $x$, the kernel
$K_{F_{0}}$ is either identically $0$ or smoothing. For, if we
consider $x$ in the complement of the set
$(-2\epsilon,2\epsilon)^{2}$, then due to the fact that
$\mbox{supp}(\psi_{1}(x)\psi_{2}(x))\subset[-2\epsilon,2\epsilon]^{2}$,
we have that the kernel is identically $0$. Now if we consider $x\in
(-2\epsilon,2\epsilon)^{2}$, then $\wt{\vp}$ is never vanishing. Then
by an integration by parts argument as in Case \eqref{<} above, we have that
$K_{F_{0}}$ is smoothing.
\end{enumerate}

Now we consider $F_{2}^{*}F_{1}$. We have
\[
K_{F_{1}}(s,t,x)=\int e^{-\I \omega \wt{\vp}(s,t,x)} \psi_{1}(x)(1-\psi_{2}(x)) a(s,t,x,\omega)\D \omega.
\]
and
\[
K_{F_{2}}^{*}(x,s,t)=\int e^{\I \omega\wt{\vp}(s,t,x)}(1-\psi_{1}(x))\psi_{2}(x)\overline{a(s,t,x,\omega)} \D \omega.
\]
Due to the cut-off functions $\psi_{1}$ and $\psi_{2}$ in these
kernels, we are only interested in those singularities lying above a
small neighborhood of the rectangles with vertices $(\pm
\epsilon,\pm\epsilon)$, $(\pm \epsilon,\pm 2\epsilon)$, $(\pm
2\epsilon,\pm \epsilon)$, $(\pm 2\epsilon, \pm 2\epsilon)$.

We have that $K_{F_1}$ is smoothing when $x$ values are restricted to
a small neighborhood of these rectangles. For, as in the previous
case, we consider the three cases: For Cases \eqref{<} and \eqref{>}, the kernel
$K_{F_1}$ is smoothing and the proof is identical as before. For Case
\eqref{=}, due to the choice of the function $g(s,t)$, the kernel
$K_{F_1}=0$. Therefore $F_{2}^{*}F_{1}$ is smoothing.

\epr
 \begin{lemma}\label{ImagingSection:Lemma F1}
  $F^*_1F_3$, $F^*_2 F_3$ and $F^*_3F$ can be decomposed as a sum of operators belonging to the space $I^{2m}(\Delta)+I^{2m}(C_{1}\setminus \Delta)+I^{2m}(C_{2}\setminus \Delta)+I^{2m}(C_{3}\setminus(C_{1}\cup C_{2}))$.
 \end{lemma}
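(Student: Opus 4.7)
The plan is to show that each of $F_1^*F_3$, $F_2^*F_3$, and $F_3^*F$ decomposes microlocally into four pieces, one associated to each of the Lagrangians $\Delta, C_1, C_2, C_3$ of Proposition \ref{Wavefront of composition}, and that the cutoffs forcing at least one of the factors to be supported away from both coordinate axes place us in the open set where these four Lagrangians are pairwise disjoint. On that open set, each piece is a standard FIO of order $2m$.

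First I would write $F_3^*F=F_3^*F_0+F_3^*F_1+F_3^*F_2+F_3^*F_3$ and treat each summand separately; the compositions $F_1^*F_3$ and $F_2^*F_3$ are handled analogously. In every summand, the cutoff $(1-\psi_1)(1-\psi_2)$ coming from $F_3$ (or from $F_3^*$) confines one of the base variables to $\{|x_1|>\epsilon,\ |x_2|>\epsilon\}$. By Proposition \ref{Wavefront of composition}, each of the pairwise intersections $\Delta\cap C_1$, $\Delta\cap C_2$, $C_1\cap C_3$, $C_2\cap C_3$ is contained in $\{x_1=0\}\cup\{x_2=0\}$, so the $F_3$ cutoff removes all of them, and on the remaining support the four Lagrangians are disjoint smooth submanifolds.

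Next I would introduce a microlocal partition of unity in the $(s,\omega)$ integration of the kernel $K$ of \eqref{Kernel of composed operator} subordinate to the four branches $\phi'\in\{\phi,-\phi,\pi-\phi,\pi+\phi\}$ of the critical set produced by the prolate spheroidal analysis in Proposition \ref{Wavefront of composition}. Each localized piece then has phase $\Phi$ of \eqref{Phase of kernel} whose critical set in $\omega$ parametrizes exactly one of the Lagrangians, and since $\pi_R$ is a local diffeomorphism on the support of the cutoff (the fold set $\Sigma=\Sigma_1\cup\Sigma_2$ being excluded), the resulting phase is nondegenerate. A standard transverse FIO composition argument then yields that each localized piece is an FIO of order $2m$ associated to the corresponding restricted Lagrangian, belonging respectively to $I^{2m}(\Delta)$, $I^{2m}(C_1\setminus\Delta)$, $I^{2m}(C_2\setminus\Delta)$, or $I^{2m}(C_3\setminus(C_1\cup C_2))$.

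The main obstacle I expect is the case of $F_1^*F_3$ and $F_2^*F_3$, where the \textquotedblleft bad\textquotedblright\ factor $F_1$ (respectively $F_2$) retains a fold along $\Sigma_1$ (respectively $\Sigma_2$). One must check that composition with $F_3$, whose amplitude support excludes a neighborhood of both axes in the opposite base variable, restricts the composition to a region where the canonical relations meet transversally, so that the standard FIO composition calculus applies and preserves order $2m$ on each branch. Careful bookkeeping is also required to confirm that the orders add to $2m$ in view of the product-type amplitude $\tilde a\in S^{2m+1}$ and the single remaining phase variable $\omega$. Once this transversality is verified for each branch and each summand, summing the four pieces realizes each of the three operators as the required element of $I^{2m}(\Delta)+I^{2m}(C_1\setminus\Delta)+I^{2m}(C_2\setminus\Delta)+I^{2m}(C_3\setminus(C_1\cup C_2))$.
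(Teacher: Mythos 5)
Your overall strategy --- isolate the contributions of the four Lagrangians and apply the transverse composition calculus on the region where they are pairwise disjoint --- is the same as the paper's, and your observation that the cutoff $(1-\psi_1)(1-\psi_2)$ carried by $F_3$ removes all pairwise intersections of $\Delta, C_1, C_2, C_3$ (each being contained in $\{x_1=0\}$ or $\{x_2=0\}$, and $|x_i|=|y_i|$ on every branch) is correct. The gap is in the mechanism you propose for separating the branches. A partition of unity in the $(s,\omega)$ integration variables cannot be ``subordinate to the four branches $\phi'=\pm\phi,\ \pi\pm\phi$'': these branches are distinguished by the relation between $y$ and $x$ (namely $y=(\pm x_1,\pm x_2)$), not by the values of $s$ and $\omega$. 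Indeed, for a fixed pair $(x,y)$ with $y$ any of the four reflections of $x$, one has $\partial_\omega\Phi\equiv 0$ for \emph{every} $(s,\omega)$, so all four branches contribute over the same region of the $(s,\omega)$-integration and no cutoff there isolates one of them. The separation must be done in the base variables: the paper writes $F_3=F_3^{1}+\dots+F_3^{4}$ and $F_1=F_1^{1}+\dots+F_1^{4}$ by restricting to the four quadrants of the $x$-plane (legitimate because the supports already avoid both axes), after which each product $(F_1^{i})^{*}F_3^{j}$ has wavefront set in exactly one of $\Delta$, $C_1\setminus\Delta$, $C_2\setminus\Delta$, $C_3\setminus(C_1\cup C_2)$.

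The second point you flag --- that $F_1$ still carries the degenerate set $\Sigma_1$ --- is resolved more directly than by re-examining transversality of the phases: since the canonical relation of $F_3$ is supported away from $\Sigma_1\cup\Sigma_2$, one may insert a microlocal cutoff into $F_1^{*}$ supported away from $\Sigma_1$ without changing $F_1^{*}F_3$ modulo smoothing operators; both factors are then local canonical graphs, and the transverse intersection calculus gives order $m+m=2m$ on each branch. With the quadrant decomposition replacing your $(s,\omega)$ partition, and this microlocalization made explicit, your argument goes through.
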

 \bpr
 Each of these compositions is covered by the transverse intersection calculus. Below we will prove for the case of $F_{1}^{*}F_{3}$. For the other operators, the proofs are similar.

Let us decompose $F_{3}=F_{3}^{1}+F_{3}^{2}+F_{3}^{3}+F_{3}^{4}$ and $F_{1}^{*}=\lb F_{1}^{1}+F_{1}^{2}+F_{1}^{3}+F_{1}^{4}\rb ^{*}$, where the superscripts in both these sums denote restriction of $F_{3}$ and $F_{1}$, respectively, to each of the four quadrants. Note that in the decomposition of $F_{1}^{*}$, we stay away from $\Sigma_{1}$ by introducing a microlocal cutoff. This is valid because the support of the canonical relation of $F_{3}$ stays away from $\Sigma_{1}\cup \Sigma_{2}$. Then we have
\[
(F_{1}^{1})^{*}\NT F_{3}^{1}\in I^{2m}(\Delta), (F_{1}^{1})^{*}\NT F_{3}^{4}\in I^{2m}(C_{1}\setminus \Delta),  (F_{1}^{1})^{*}\NT F_{3}^{2} \in I^{2m}(C_{2}\setminus \Delta) \mbox{ and } (F_{1}^{1})^{*}\NT F_{3}^{3}\in I^{2m}(C_{3}\setminus(C_{1}\cup C_{2})).
\]
The other compositions can be considered similarly.
 \epr
We are left with the analysis of the compositions $F_{1}^{*}F_{1}$ and $F_{2}^{*}F_{2}$. This is the content of the next theorem:
\begin{theorem}
\label{ImagingSection:I(p,l) regularity}
 Let $F_{1}$ and $F_{2}$ be as above. Then
\begin{enumerate}[(a)]
\item $F_{1}^{*}F_{1}\in I^{2m,0}(\Delta, C_{1})+I^{2m,0}(C_{2},C_{3})$.
\item $F_{2}^{*}F_{2}\in I^{2m,0}(\Delta, C_{2})+I^{2m,0}(C_{1},C_{3})$.
\end{enumerate}
\end{theorem}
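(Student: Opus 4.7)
The proof will establish (a); (b) follows by the symmetry $(x_1,x_2)\leftrightarrow(x_2,x_1)$ together with the corresponding interchange of the cutoffs $\psi_1$ and $\psi_2$. The support of $\psi_1(1-\psi_2)$ breaks into two disjoint connected components, namely the strips $\{x_1>\epsilon,\ |x_2|<2\epsilon\}$ and $\{x_1<-\epsilon,\ |x_2|<2\epsilon\}$. I would introduce smooth cutoffs $\chi_\pm$ supported in these components with $\chi_++\chi_-=\psi_1(1-\psi_2)$, and correspondingly split $F_1=F_1^++F_1^-$, so that
\[
F_1^*F_1 = (F_1^+)^*F_1^+ + (F_1^-)^*F_1^- + (F_1^+)^*F_1^- + (F_1^-)^*F_1^+.
\]
The plan is to show that the two diagonal-like terms $(F_1^\pm)^*F_1^\pm$ lie in $I^{2m,0}(\Delta,C_1)$ and the two cross terms $(F_1^\pm)^*F_1^\mp$ lie in $I^{2m,0}(C_2,C_3)$.

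The wavefront containments are read directly from the case analysis in the proof of Proposition \ref{Wavefront of composition}. In $(F_1^\pm)^*F_1^\pm$ the kernel is supported where $x_1$ and $y_1$ have the same sign and both exceed $\epsilon$ in absolute value, which excludes the branches $\phi'=\pi\pm\phi$ (these would force $y_1=-x_1$); only $\Delta$ (from $\phi'=\phi$) and $C_1$ (from $\phi'=-\phi$) remain. For $(F_1^\pm)^*F_1^\mp$ the situation is reversed: $x_1,y_1$ have opposite signs, so $\phi'=\pm\phi$ are excluded and only $C_2$ (from $\phi'=\pi-\phi$) and $C_3$ (from $\phi'=\pi+\phi$) remain. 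By Proposition \ref{Wavefront of composition}, each of the pairs $(\Delta,C_1)$ and $(C_2,C_3)$ intersects cleanly in codimension $2$ along $\{x_2=\xi_2=0\}$, so each is a legitimate target for Definition \ref{def:Ipl}.

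To upgrade these wavefront containments to actual $I^{p,l}$ membership, I would invoke the iterated-regularity criterion of Proposition \ref{Ipl3}. For $(F_1^+)^*F_1^+$, let $P_1,\ldots,P_r$ be first-order pseudodifferential operators whose principal symbols vanish on $\Delta\cup C_1$. Applying these in sequence to the oscillatory integral \eqref{Kernel of composed operator} (with amplitude $\wt a\,\chi_+(x)\chi_+(y)\in S^{2m+1}$) produces a new oscillatory integral whose amplitude vanishes to order $r$ on both branches of the critical set of $\Phi$ that parameterize $\Delta$ and $C_1$. A stationary-phase expansion along these two cleanly intersecting branches then yields $P_1\cdots P_r K\in H^{s_0-r}_{loc}$ for a fixed $s_0$ read off from Definition \ref{def:Ipl} using the base dimension $n=2$, the amplitude order $2m+1$, and the codimension-one correction $1/2$; this is precisely what is needed to conclude $(F_1^+)^*F_1^+\in I^{2m,0}(\Delta,C_1)$. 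The same argument, with $\Delta\cup C_1$ replaced by $C_2\cup C_3$, delivers the analogous statement for the cross terms.

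The main technical hurdle will be carrying out the iterated-regularity count near the merging set $\Delta\cap C_1=\{x_2=\xi_2=0\}$, where the blowdown of $\pi_R$ along $\Sigma_1$ causes the two branches of the critical set of $\Phi$ to collide and the naive stationary-phase gain degenerates. I expect the cleanest route is to change variables via the prolate spheroidal coordinates \eqref{Prolate coordinate system}, in which the four branches of $C^t\circ C$ become the explicit reflections $\phi'\in\{\pm\phi,\pi\pm\phi\}$, and then to produce a local Melrose-Uhlmann normal form at the merging set that matches the model pair $(\wt\Lambda_0,\wt\Lambda_1)$ of Example \ref{ex:model}; the orders $p=2m$ and $l=0$ then fall out of Definition \ref{def:Ipl}.
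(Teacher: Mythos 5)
Your decomposition $F_1=F_1^++F_1^-$ by a cutoff in the sign of $x_1$, and the wavefront bookkeeping that places the diagonal-like terms over $\Delta\cup C_1$ and the cross terms over $C_2\cup C_3$, coincide with the paper's argument. The gap is in the step where you apply the iterated-regularity criterion. Proposition \ref{Ipl3} requires the existence of a \emph{single} $s_0$ such that $P_1\cdots P_r u\in H^{s_0}_{loc}$ for \emph{every} $r$ and every choice of first-order operators with symbols vanishing on $\Lambda_0\cup\Lambda_1$; your stated conclusion $P_1\cdots P_rK\in H^{s_0-r}_{loc}$ is not that hypothesis --- it holds trivially for any distribution of finite order (each first-order operator costs one derivative) and therefore proves nothing. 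The entire content of the criterion is that the vanishing of the symbols on the two Lagrangians exactly cancels the a priori loss, so that no regularity is lost at all, uniformly in $r$.

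To get that cancellation you need a concrete mechanism, and this is what your proposal omits. The paper exhibits explicit generators $\wt p_1,\dots,\wt p_6$ of the ideal of functions vanishing on $\Delta\cup C_1$ and proves (Appendix \ref{Appendix:RegularityFormulas}, identities \eqref{Identity 1}--\eqref{Identity 6}, derived precisely in the prolate spheroidal coordinates you mention) that each $\wt p_i$ is a smooth combination of $\PD_s\Phi$ and $\PD_\omega\Phi$ with the correct homogeneities in $\omega$. Applying $P_i$ to the kernel \eqref{Kernel of composed operator} then amounts to inserting $p_i$ into the amplitude, and integrating by parts in $s$ and $\omega$ returns an oscillatory integral with an amplitude of the \emph{same} order $2m+1$; iterating keeps the kernel in a fixed $H^{s_0}_{loc}$. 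A "stationary-phase expansion along the two branches" cannot replace this, since --- as you yourself observe --- the branches collide along $\Sigma_1$, where $\pi_R$ is a blowdown and stationary phase degenerates; the integration-by-parts identities are exact and suffer no such degeneration. Finally, the orders $p=2m$, $l=0$ do not "fall out of Definition \ref{def:Ipl}": they are pinned down by computing the order of $(F_1^+)^*F_1^+$ on $\Delta\setminus C_1$ and on $C_1\setminus\Sigma$ via the transverse intersection calculus (valid there because $C$ is a local canonical graph away from $\Sigma$), which gives $p+l=2m$ and $p=2m$. For the cross terms the paper also offers a cleaner alternative to redoing the ideal computation: conjugating by the reflection $R V(x_1,x_2)=V(-x_1,x_2)$, an order-zero FIO with canonical relation $C_2$, reduces $(F_1^-)^*F_1^+$ to the $(\Delta,C_1)$ case via \cite[Proposition 4.1]{Guillemin-Uhlmann} (Proposition \ref{I(p,l) regularity of cross terms}); you may find that route easier than verifying the identities for the generators of the ideal of $C_2\cup C_3$.
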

\bpr
We consider $F_{1}^{*}F_{1}$. The proof for $F_{2}^{*}F_{2}$ is similar.

We decompose $F_{1}$ by introducing a smooth cut-off function $\psi_{3}(x)$ such that $\psi_{3}(x)=1$ for $x_{1}>\epsilon/2$ and supported on the right-half plane $x_{1}\geq \epsilon/4$. That is,
we write
$F_{1}$ as
\[
F_{1}=F_{1}^{+}+F_{1}^{-},
\]
where
\begin{align*}
F_{1}^{+}V(s,t)=\int e^{-\I \vp(s,t,x,\omega)}\psi_{1}(x)(1-\psi_{2}(x))\psi_{3}(x)a(s,t,x,\omega)V(x)\D x
\end{align*}
and
\begin{align*}
F_{1}^{-}V(s,t)=\int e^{-\I \vp(s,t,x,\omega)}\psi_{1}(x)(1-\psi_{2}(x))(1-\psi_{3}(x))a(s,t,x,\omega)V(x)\D x
\end{align*}
Now
\Beq\label{F1*F1 decomposition}
F_{1}^{*}F_{1}=(F_{1}^{+})^{*}F_{1}^{+}+(F_{1}^{-})^{*}F_{1}^{+}+(F_{1}^{+})^{*}F_{1}^{-}+(F_{1}^{-})^{*}F_{1}^{-}.
\Eeq

The canonical relation of $F_{1}^{+}$ is a subset of \eqref{Common-midpoint Canonical Relation} with the additional condition that $x_{1}>\epsilon$. Then we have that
$WF((F_{1}^{+})^{*}F_{1}^{+})'\subset \Delta \cup C_{1}$. For, we already saw in Proposition \ref{Wavefront of composition} that $WF(F^{*}F)\subset \Delta \cup C_{1}\cup C_{2}\cup C_{3}$. In our case, imposing the additional restriction that $x_{1}>\epsilon$, the only contributions are in $\Delta$ and $C_{1}$. By a similar argument, we have that $WF((F_{1}^{-})^{*}F_{1}^{-})'\subset \Delta \cup C_{1}$.

Now let us consider the compositions $(F_{1}^{-})^{*}F_{1}^{+}$ and $(F_{1}^{+})^{*}F_{1}^{-}$. The wavefront sets of these operators are of the form $(x,\xi,y,\eta)$ such that $x_{1}$ and $y_{1}$ have opposite signs. We have already established in Proposition \ref{Wavefront of composition} that $|x_{i}|=|y_{i}|$ and $|\xi_{i}|=|\eta_{i}|$ for $i=1,2$. Now with the additional restriction that $x_{1}$ and $y_{1}$ have opposite signs (and therefore $\xi_{1}$ and $\eta_{1}$ have different signs as well), we have contributions contained in only $C_{2}$ and $C_{3}$.

The Lagrangian pairs $\Delta, C_{1}$ and $C_{2}, C_{3}$ intersect cleanly. Therefore there is a well-defined $I^{p,l}$ class -- which we will identify shortly -- in which each of the summands in \eqref{F1*F1 decomposition} lie.

We now show that
$(F_{1}^{+})^{*}F_{1}^{+}, (F_{1}^{-})^{*}F_{1}^{-}\in I^{2m,0}(\Delta,C_{1})$
and  that $ (F_{1}^{-})^{*}F_{1}^{+}, (F_{1}^{+})^{*}F_{1}^{-}\in I^{2m,0}(C_{2},C_{3})$. We follow the ideas of \cite{RF1}, where the iterated regularity theorem of  was used to prove an analogous result. The ideas of \cite{RF1} were recently employed to prove a similar result for a common-offset geometry in \cite{Krishnan-Quinto}. The proof we give is similar to the one given in \cite{Krishnan-Quinto}, but the phase function we work with is different.

We first consider the generator of the ideal of functions that vanish on $\Delta \cup C_{1}$ \cite{RF1}.
\begin{align*}
&\wt{p}_{1}=x_{1}-y_{1},\quad \wt{p}_{2}=x_{2}^{2}-y_{2}^{2},\quad \wt{p}_{3}=\xi_{1}-\eta_{1},\quad \wt{p}_{4}=(x_{2}+y_{2})(\xi_{2}-\eta_{2}),\\
&\wt{p}_{5}=(x_{2}-y_{2})(\xi_{2}+\eta_{2}),\quad \wt{p}_{6}=\xi_{2}^{2}-\eta_{2}^{2}.
\end{align*}

Let $p_{i}=q_{i}\wt{p}_{i}$, for $1\leq i\leq 6$, where $q_{1}, q_{2}$ are homogeneous of degree $1$ in $(\xi,\eta)$, $q_{3},q_{4}$ and $q_{5}$ are homogeneous of degree $0$ in $(\xi,\eta)$  and $q_{6}$ is homogeneous of degree $-1$ in $(\xi,\eta)$. Let $P_{i}$ be pseudodifferential operators with principal symbols $p_{i}$ for $1\leq i\leq 6$.

We show in Appendix \ref{Appendix:RegularityFormulas} that each $\wt{p}_{i}$ can be expressed in the following forms:
\begin{align}
\label{Identity 1}&\wt{p}_{1}=\frac{f_{11}(x,y,s)}{\omega}\PD_{s}\Phi+f_{12}(x,y,s)\PD_{\omega}\Phi\\
\label{Identity 2}&\wt{p}_{2}=\frac{f_{21}(x,y,s)}{\omega}\PD_{s}\Phi+f_{22}(x,y,s)\PD_{\omega}\Phi\\
\label{Identity 3}&\wt{p}_{3}= f_{31}(x,y,s)\PD_{s}\Phi+\omega f_{32}(x,y,s)\PD_{\omega}\Phi\\
\label{Identity 4}&\wt{p}_{4}=f_{41}(x,y,s)\PD_{s}\Phi+\omega f_{42}(x,y,s)\PD_{\omega}\Phi\\
\label{Identity 5}&\wt{p}_{5}=f_{51}(x,y,s)\PD_{s}\Phi+\omega f_{52}(x,y,s)\PD_{\omega}\Phi\\
\label{Identity 6}&\wt{p}_{6}=\omega f_{61}(x,y,s)\PD_{s}\Phi+\omega^{2}f_{62}(x,y,s)\PD_{\omega}\Phi
\end{align}
where $f_{ij}$ for $1\leq i\leq 6$ and $j=1,2$ are  smooth functions.

Now the rest of the proof is the same as in \cite[Theorem 1.6]{RF1}. We give it for completeness.

Let $K^{+}_{1}$ be the kernel of $(F_{1}^{+})^{*}F_{1}^{+}$. This is the kernel in \eqref{Kernel of composed operator}, but with $\wt{a}$ there replaced by $\psi_{1}(x)(1-\psi_{2}(x))\psi_{3}(x)\psi_{1}(y)(1-\psi_{2}(y))\psi_{3}(y)\wt{a}(x,y,s,\omega)$. For simplicity, we rename this as $\wt{a}$ again.

We then have that $\tilde{a} \in S^{2m+1}$ and
\begin{align*}
P_{1}K_{1}^{+}(x,y)&=\int e^{\I \Phi(x,y,s,\omega)}\wt{a}(x,y,s,\omega)q_{1}\lb \frac{f_{11}(x,y,s)}{\omega}\PD_{s}\Phi+f_{12}(x,y,s)\PD_{\omega}\Phi\rb \D s\D \omega\\
&=\int \PD_{s}\lb e^{\I \Phi(x,y,s,\omega)}\rb\frac{q_{1}}{\I \omega}\wt{a}(x,y,s,\omega)f_{11}(x,y,s)\D s\D \omega\\
&+\int \PD_{\omega}\lb e^{\I \Phi(x,y,s,\omega)}\rb\frac{q_{1}}{\I}\wt{a}(x,y,s,\omega)f_{12}(x,y,s)\D s\D \omega\\
\intertext{By integration by parts}
&=-\Bigg{\{}\int e^{\I \Phi(x,y,s,\omega)}\PD_{s}\lb \frac{q_{1}}{\I \omega}\wt{a}(x,y,s,\omega)f_{11}(x,y,s)\rb \D s\D \omega\\
&+\int e^{\I \Phi(x,y,s,\omega)}\PD_{\omega}\lb\frac{q_{1}}{\I}\wt{a}(x,y,s,\omega)f_{12}(x,y,s)\rb \D s\D \omega \Bigg{\}}.
\end{align*}
Note that $q_{1}$ is homogeneous of degree $1$ in $\omega$, and $\wt{a}$ is a symbol of order $2m+1$, hence each amplitude term in the sum above is of order $2m+1$.

Therefore by Definition \ref{Ipl3}, we have that $P_{1}K_{1}^{+} \in H^{s_{0}}_{\mathrm{loc}}$ for some $s_{0}$.

A similar argument works for each of the other five pseudodifferential
operators. Hence by Proposition \ref{Ipl3}, we have that
$(F_{1}^{+})^{*}F_{1}^{+}\in I^{p,l}(\Delta,C_1)$. Because $C$ is a
local canonical graph away from $\Sigma$, the transverse intersection
calculus applies for the composition $(F_{1}^{+})^{*}F_{1}^{+}$ away
from $\Sigma$. Hence $(F_{1}^{+})^{*}F_{1}^{+}$ is of order $2m$ on
$\Delta\setminus C_1$ and $C_1 \setminus \Sigma$. Since
$(F_{1}^{+})^{*}F_{1}^{+}$ is of order $p+l$ on $\Delta\setminus
\Sigma$ and is of order $p$ on $C_1 \setminus \Sigma$, we have that
$p=2m$ and $l=0$. Therefore $(F_{1}^{+})^{*}F_{1}^{+}\in
I^{2m,0}(\Delta,C_{1})$. Similarly $(F_{1}^{-})^{*}F_{1}^{-} \in
I^{2m,0}(\Delta,C_{1})$.

To show that  $(F_{1}^{-})^{*}F_{1}^{+}, (F_{1}^{+})^{*}F_{1}^{-}\in I^{2m,0}(C_{2},C_{3})$ we can use the iterated regularity result as above.

The generators of the ideal of functions that vanish on $C_2 \cup C_3$ are:
\[ \wt{r}_{1}=x_{1}+y_{1},  \wt{r}_{2}=\xi_{1}+\eta_{1} \mbox{ and }   \tilde{p}_2, \  \tilde{p}_4,  \ \tilde{p}_5,\  \tilde{p}_6  \mbox{ are the same as in } \eqref{Identity 2}, \eqref{Identity 4}, \eqref{Identity 5} \mbox{ and } \eqref{Identity 6} \mbox{ respectively}.
\]
Four of the functions in the ideal are the same as in the proof above and we can find similar expressions for the first two.

However we will also give an alternate proof below.

\epr

\begin{proposition}\label{I(p,l) regularity of cross terms}
$(F_{1}^{-})^{*}F_{1}^{+}, (F_{1}^{+})^{*}F_{1}^{-}\in I^{2m,0}(C_{2},C_{3})$.
\end{proposition}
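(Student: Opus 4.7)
The plan is to exploit the reflection symmetry $x_{1}\mapsto -x_{1}$ in the phase and amplitude of $F$ to reduce the cross-term compositions to the same-sign cases already handled in Theorem \ref{ImagingSection:I(p,l) regularity}. Let $\tilde R: \Ec'(X)\to\Dc'(X)$ denote the pullback operator $\tilde R V(x_{1},x_{2}) = V(-x_{1},x_{2})$. It is self-adjoint, involutive, and unitary, and from its Schwartz kernel $\delta(x_{1}+y_{1})\delta(x_{2}-y_{2})$ one reads off that $\tilde R$ is a Fourier integral operator of order $0$ whose canonical relation coincides exactly with $C_{2}$ (the graph of $\chi_{2}$).

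The first main step is the operator identity $F_{1}^{+}\tilde R = F_{1}^{-}$. The key observation is that the phase $\varphi$ of \eqref{def:vp} and the amplitude $a$ of \eqref{Bi-static Amplitude} are both even in $x_{1}$, while the cutoffs $\psi_{1},\psi_{2}$ may be chosen to depend only on $|x_{2}|$ and $|x_{1}|$ respectively. Performing the substitution $x_{1}\mapsto -x_{1}$ in the oscillatory integral defining $F_{1}^{+}\tilde R$ therefore replaces $\psi_{3}(x)$ by $\psi_{3}^{-}(x):=\psi_{3}(-x_{1},x_{2})$ while leaving everything else unchanged, and on the support of $\psi_{1}(x)(1-\psi_{2}(x))$ (contained in $\{|x_{1}|>\epsilon\}$) the function $\psi_{3}^{-}$ agrees identically with $1-\psi_{3}$. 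Taking adjoints and using $\tilde R^{*}=\tilde R$ and $\tilde R^{2}=\mathrm{Id}$ yields $(F_{1}^{-})^{*}=\tilde R(F_{1}^{+})^{*}$ and $(F_{1}^{+})^{*}=\tilde R(F_{1}^{-})^{*}$, so the two cross compositions reduce to
\[
(F_{1}^{-})^{*}F_{1}^{+}=\tilde R\,(F_{1}^{+})^{*}F_{1}^{+},\qquad (F_{1}^{+})^{*}F_{1}^{-}=\tilde R\,(F_{1}^{-})^{*}F_{1}^{-}.
\]

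To conclude, I would invoke Theorem \ref{ImagingSection:I(p,l) regularity}, which places $(F_{1}^{\pm})^{*}F_{1}^{\pm}\in I^{2m,0}(\Delta,C_{1})$, and note that the canonical transformation $\chi_{2}$ sends $\Delta$ to $C_{2}$ and $C_{1}$ to $C_{3}$, transporting the cleanly intersecting codimension-$1$ pair $(\Delta,C_{1})$ to $(C_{2},C_{3})$. Left-multiplication by the order-zero FIO $\tilde R$ should therefore land the result in $I^{2m,0}(C_{2},C_{3})$, with indices $p=2m$ and $l=0$ pinned down by the fact that $C$ is a local canonical graph away from $\Sigma$, so that the orders on $C_{2}\setminus C_{3}$ and $C_{3}\setminus C_{2}$ are both $2m$ by the transverse intersection calculus. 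The step most at risk is this last one, since the invariance statement recalled just after Definition \ref{def:Ipl} is phrased for FIOs whose canonical transformation preserves the Lagrangian pair, whereas here $\chi_{2}$ swaps one pair for another; to close this gap cleanly, I would fall back on the iterated regularity criterion of Proposition \ref{Ipl3} applied to the generators $x_{1}+y_{1},\,\xi_{1}+\eta_{1},\,\wt p_{2},\,\wt p_{4},\,\wt p_{5},\,\wt p_{6}$ of the ideal vanishing on $C_{2}\cup C_{3}$, four of which are already handled in the proof of Theorem \ref{ImagingSection:I(p,l) regularity}.
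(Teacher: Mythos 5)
Your proposal is essentially the paper's own argument: the paper also introduces the reflection operator $R V(x_1,x_2)=V(-x_1,x_2)$, identifies it as an order-zero FIO with canonical relation $C_2$, reduces the cross terms to a same-sign composition in $I^{2m,0}(\Delta,C_1)$, and then composes with $R$ to transport $(\Delta,C_1)$ to $(C_2,C_3)$. Two small points of comparison. First, your identity $F_1^{+}\tilde R=F_1^{-}$ requires the amplitude $a(s,t,x,\omega)$ to be even in $x_1$; that holds for the physical amplitude \eqref{Bi-static Amplitude}, but Theorem \ref{theorem-decomposition} is stated for an arbitrary symbol $a\in S^{m+1/2}$. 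The paper sidesteps this by setting $\wt F=F_1^{-}\circ R$ (so that $(F_1^{-})^{*}F_1^{+}=R^{*}\wt F^{*}F_1^{+}$ holds exactly, with the reflected amplitude $a(s,t,-x_1,x_2,\omega)$ absorbed into $\wt F$) and observing that the kernel of $\wt F^{*}F_1^{+}$ has the same form as \eqref{Kernel of composed operator}, so the iterated-regularity argument of Theorem \ref{ImagingSection:I(p,l) regularity} applies verbatim. Second, the step you flag as most at risk --- composing an element of $I^{2m,0}(\Delta,C_1)$ with an FIO whose canonical relation is $C_2$ rather than one preserving the pair --- is exactly what \cite[Proposition 4.1]{Guillemin-Uhlmann} covers; the paper verifies its hypotheses ($C_2\circ\Delta=C_2$, $C_2\circ C_1=C_3$, and transversality of $C_2\times\Delta$ and $C_2\times C_1$ with $T^{*}X\times\Delta_{T^{*}X}\times T^{*}X$) and concludes directly. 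Your fallback via iterated regularity with the generators $x_1+y_1$, $\xi_1+\eta_1$, $\wt p_2$, $\wt p_4$, $\wt p_5$, $\wt p_6$ of the ideal vanishing on $C_2\cup C_3$ is also legitimate and is precisely the alternate route sketched at the end of the proof of Theorem \ref{ImagingSection:I(p,l) regularity}.
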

\bpr
We show for $(F_{1}^{-})^{*}F_{1}^{+}$. The proof for the other case is similar.
Consider the operator $R$ defined as follows:
\[
RV(x_{1},x_{2})=V(-x_{1},x_{2}).
\]
This is a Fourier integral operator of order $0$ with the canonical relation $C_{2}$. This is because,
\begin{align*}
RV(x_{1},x_{2})&=\int e^{\I (x-y)\cdot \xi} R_{2}V(y_{1},y_{2}) \D y \D \xi\\
&=\int e^{\I (x-y)\cdot \xi} V(-y_{1},y_{2}) \D y \D \xi\\
&=\int e^{\I( (x_{1}+y_{1})\xi_{1}+(x_{2}-y_{2})\xi_{2})} V(y_{1},y_{2}) \D y \D \xi
\end{align*}
It is easy to check that canonical relation is $C_{2}$.

Now consider the operator $\wt{F}=F_{1}^{-}\circ R$. This is given by
\begin{align*}
\wt{F}V(s,t)&=\int e^{-\I \vp(s,t,x,\omega)}(\psi_{1}(1-\psi_{2})(1-\psi_{3}))(-x_{1},x_{2}))a(s,t,-x_{1},x_{2},\omega)V(x)\D x\D \omega\\
\intertext{
Note that $(1-\psi_{3})(-x_{1},x_{2})=\psi_{3}(x_{1},x_{2})$ except in a small neighborhood of the origin. Since $\psi_{1}(1-\psi_{2})$ is $0$ in a neighborhood of the origin, and noting that we can arrange $\psi_{1}$ and $\psi_{2}$ to be symmetric with respect to $x_{1}$, we have}
&=\int e^{-\I \vp(s,t,x,\omega)}(\psi_{1}(1-\psi_{2})\psi_{3}))(x)a(s,t,-x_{1},x_{2},\omega)V(x)\D x\D \omega.
\end{align*}
Now we have that $\wt{F}^{*}F_{1}^{+}\in I^{2m,0}(\Delta,C_{1})$. In fact the kernel of this operator has the same form as in \eqref{Kernel of composed operator} and the same proof as in Theorem \ref{ImagingSection:I(p,l) regularity} applies.
Next we use \cite[Proposition 4.1]{Guillemin-Uhlmann} to show that $R^{*}\wt{F}^{*}F_{1}^{+}\in I^{2m,0}(C_{2},C_{3})$. It is straightforward to check that $C_{2}\circ \Lambda=C_{2}$, $C_{2}\circ C_{1}=C_{3}$ and $C_{2}\times \Delta$ (as well as $C_{2}\times C_{1}$) intersects $T^{*}X\times \Delta_{T^{*}{X}}\times T^{*}X$ transversally. Hence the hypotheses of \cite[Proposition 4.1]{Guillemin-Uhlmann} are verified and we conclude that $R^{*}\wt{F}^{*}F_{1}^{+}\in I^{2m,0}(C_{2},C_{3})$. Since $\wt{F}^{*}=R^{*}(F_{1}^{-})^{*}$ and $(R^{*})^{2}=\mbox{Id}$ we have $(F_{1}^{-})^{*}F_{1}^{+}\in I^{2m,0}(C_2, C_3)$.
\epr
Since $I^{2m}(\Delta) \in I^{2m,0}(\Delta, C_1);  I^{2m}(C_{1} \setminus \Delta) \in  I^{2m,0}(\Delta, C_1);  I^{2m}(C_{2} \setminus \Delta) \in I^{2m,0}(\Delta, C_2)$ and  $I^{2m}(C_{3} \setminus(C_{1} \cup C_{2})) \in I^{2m,0}(C_1, C_3)$  then
Theorem \ref{theorem-decomposition} follows using
Lemmas \ref{ImagingSection:Lemma F0}, \ref{ImagingSection:Lemma F1},
Theorem 5.5 and Proposition \ref{I(p,l) regularity of cross terms}.\\

 \begin{remark}\label{remark:strength} Using the properties of the
$I^{p,l}$ classes, $F^*F \in I^{2m,0} (\Delta, C_1)$ implies that
$F^*F \in I^{2m} (\Delta \setminus C_1)$ and $F^*F \in I^{2m} (C_1
\setminus \Delta)$. This means that $F^*F$ has the same order on both
$\Delta$ and $C_1$ which implies that the artifact $C_1$ has the same
strength as the initial singularities given by $\Delta$. Similarly for
$C_2$ and $C_3$. Note that $C_1$ gives an artifact that is a
reflection in the $x_1$ axis, $C_2$ gives an artifact that is a
reflection in the $x_2$ axis, and $C_3$ gives an artifact that is a
reflection in the origin.
\end{remark}

\section{Acknowledgements}

All authors thank The American Institute of Mathematics (AIM) for the
SQuaREs (Structured Quartet Research Ensembles) award, which enabled
their research collaboration, and for the hospitality during the
authors' visit to AIM in April 2011.  Most of the results in this
paper were obtained during that visit.  Both Ambartsoumian and Quinto
thank the Mathematical Sciences Research Institute (MSRI) for their
hospitality during the Inverse Problems Special Semester in 2010
during which this material was discussed.

Ambartsoumian was supported in part by NSF Grant DMS 1109417,
NHARP Consortium Grant 003656-0109-2009, and DOD CDMRP Grant
BC-063989.

Felea was supported in part by Simons Foundation grant 209850.

Krishnan was supported in part by NSF grants DMS 1129154 and DMS
1109417, the first being a post-doctoral supplement to Quinto's NSF
grant DMS 0908015. Additionally he thanks Tufts University for
providing an excellent research environment and the University of
Bridgeport for the support he received as a faculty member there.

Nolan was supported in part by The Mathematics Applications
Consortium for Science and Industry (MACSI) which is funded by the Science
Foundation Ireland Mathematics Initiative Grant 06/MI/005.

Quinto was partially supported by NSF grant DMS 0908015.

\appendix

\section{}\label{Appendix:RegularityFormulas}
Here we give the derivations of \eqref{Identity 1} - \eqref{Identity
6}. We will work in the coordinate system defined in \eqref{Prolate
coordinate system}. We extend the phase function in \eqref{Phase of
kernel} to $\Rb^{3}$ by letting
\begin{align*}
\wt{\Phi}=\omega\Bigg{\{}&\sqrt{(y_{1}-s)^{2}+y_{2}^{2}+(y_{3}-h)^{2}}+\sqrt{(y_{1}+s)^{2}+y_{2}^{2}+(y_{3}-h)^{2}}\:\:-\\
&\Big{(}\sqrt{(x_{1}-s)^{2}+x_{2}^{2}+(x_{3}-h)^{2}}+\sqrt{(x_{1}+s)^{2}+x_{2}^{2}+(x_{3}-h)^{2}}\Big{)}\Bigg{\}}.
\end{align*}
Then note that
\[
\PD_{\omega}\wt{\Phi}|_{x_{3}=y_{3}=0}=\PD_{\omega}\Phi\quad \mbox{and} \quad \PD_{s}\wt{\Phi}|_{x_{3}=y_{3}=0}=\PD_{s}\Phi.
\]
\subsection{Expression for $x_{1}-y_{1}$} We obtain an expression for $x_{1}-y_{1}$ in the form
\[
A_{1}:=x_{1}-y_{1}=\frac{f_{11}(x,y,s)}{\omega}\PD_{s}\Phi+f_{12}(x,y,s)\PD_{\omega}\Phi,
\]
where $f_{11}$ and $f_{12}$ are smooth functions. In the coordinate system \eqref{Prolate coordinate system},
\[
A_{1}=s(\cosh \rho \cos \phi -\cosh \rho'\cos\phi')
\]
\[
\PD_{\omega}\wt{\phi}=2s(\cosh \rho'-\cosh \rho).
\]
\begin{align*}
\PD_{s}\wt{\Phi}&=\omega\Bigg{\{} \Big{(}\frac{\cosh \rho'\cos\phi'+1}{\cosh \rho'+\cos\phi'}-\frac{\cosh \rho'\cos\phi'-1}{\cosh \rho'-\cos\phi'}\Big{)}-\Big{(}\frac{\cosh \rho\cos\phi+1}{\cosh \rho+\cos\phi}-\frac{\cosh \rho\cos\phi-1}{\cosh \rho-\cos\phi}\Big{)}\Bigg{\}}\\
&=2\omega\Bigg{\{} \frac{\cosh\rho'-\cosh\rho'\cos^{2}\phi'}{\cosh^{2}\rho'-\cos^{2}\phi'}-\frac{\cosh\rho-\cosh\rho\cos^{2}\phi}{\cosh^{2}\rho-\cos^{2}\phi}\Bigg{\}}
\intertext{After simplifying, we get,}
&=2\omega\Bigg{\{}\frac{(\cosh \rho-\cosh \rho')(\cosh\rho\cosh\rho'-\cos^{2}\phi\cos^{2}\phi')} {(\cosh^{2}\rho'-\cos^{2}\phi')(\cosh^{2}\rho-\cos^{2}\phi)} +\\
&\hspace{0.15in}\frac{(\cosh\rho'\cos^{2}\phi-\cosh\rho\cos^{2}\phi')(\cosh\rho\cosh\rho'-1)}{(\cosh^{2}\rho'-\cos^{2}\phi')(\cosh^{2}\rho-\cos^{2}\phi)}\Bigg{\}}.
\end{align*}
Now observing that $\cosh\rho'-\cosh \rho=\frac{\PD_{\omega}\wt{\Phi}}{2s}$ and adding and subtracting $\cosh\rho\cos^{2}\phi$ to the second term on the right above, we have,
\begin{align*}
&\PD_{s}\wt{\Phi}+\frac{\omega}{s}\frac{(\cosh\rho\cosh\rho'-\cos^{2}\phi\cos^{2}\phi')}{(\cosh^{2}\rho'-\cos^{2}\phi')(\cosh^{2}\rho-\cos^{2}\phi)}\PD_{\omega}\wt{\Phi}=\\
&2\omega\frac{\Big{\{}(\cosh\rho'-\cosh\rho)\cos^{2}\phi+\cosh\rho(\cos^{2}\phi-\cos^{2}\phi')\Big{\}}(\cosh\rho\cosh\rho'-1)}{(\cosh^{2}\rho'-\cos^{2}\phi')(\cosh^{2}\rho-\cos^{2}\phi)}.
\end{align*}
From this we get
\begin{align}\label{Diff. of cosines}
\cos\phi-\cos\phi'=&\frac{\PD_{s}\wt{\Phi}(\cosh^{2}\rho'-\cos^{2}\phi')(\cosh^{2}\rho-\cos^{2}\phi)}{2\omega\cosh\rho(\cosh\rho'\cosh\rho-1)(\cos\phi+\cos\phi')}\\
\notag&+\frac{\frac{\omega}{s}\PD_{\omega}\wt{\Phi}\left((\cosh\rho\cosh\rho'-\cos^{2}\phi\cos^{2}\phi')-\cos^{2}\phi(\cosh\rho\cosh\rho'-1)\right)}{2\omega\cosh\rho(\cosh\rho'\cosh\rho-1)(\cos\phi+\cos\phi')}.
\end{align}
Now note that
\begin{align*}
A_{1}=&\frac{s\PD_{s}\wt{\Phi}(\cosh^{2}\rho'-\cos^{2}\phi')(\cosh^{2}\rho-\cos^{2}\phi)}{2\omega(\cosh\rho'\cosh\rho-1)(\cos\phi+\cos\phi')}\\
&+\frac{\omega\PD_{\omega}\wt{\Phi}\left((\cosh\rho\cosh\rho'-\cos^{2}\phi\cos^{2}\phi')-\cos^{2}\phi(\cosh\rho\cosh\rho'-1)\right)}{2\omega(\cosh\rho'\cosh\rho-1)(\cos\phi+\cos\phi')}-\frac{\cos\phi'}{2}\PD_{\omega}\wt{\Phi}\\
\intertext{Now letting $x_{3}=y_{3}=0$, we see that we have written $x_{1}-y_{1}$ as a combination in terms of $\PD_{\omega}\Phi$ and $\PD_{s}\Phi$ as follows:}
&=\frac{s(\cosh^{2}\rho'-\cos^{2}\phi')(\cosh^{2}\rho-\cos^{2}\phi)}{2(\cosh\rho'\cosh\rho-1)(\cos\phi+\cos\phi')}\frac{\PD_{s}\Phi}{\omega}+\\
&\Big{\{}\frac{\left((\cosh\rho\cosh\rho'-\cos^{2}\phi\cos^{2}\phi')-\cos^{2}\phi(\cosh\rho\cosh\rho'-1)\right)}{2(\cosh\rho'\cosh\rho-1)(\cos\phi+\cos\phi')}-\frac{\cos\phi'}{2}\Big{\}}\PD_{\omega}\Phi
\end{align*}
We can write the above expression in the Cartesian coordinate system. First, for simplicity, let
\begin{align*}
X_{1}=\sqrt{(x_{1}-s)^{2}+x_{2}^{2}+h^{2}}\\
X_{2}=\sqrt{(x_{1}+s)^{2}+x_{2}^{2}+h^{2}}
\end{align*}
with $Y_{1}$ and $Y_{2}$ being similarly defined with $x$ replaced by $y$. Then we have

\begin{align*}
x_{1}-&y_{1}=\frac{s \lb\frac{Y_{1}Y_{2}}{s^{2}}\rb\lb\frac{X_{1}X_{2}}{s^{2}}\rb}{2\Big{(} \lb\frac{Y_{1}+Y_{2}}{2s}\rb\lb\frac{X_{1}+X_{2}}{2s}\rb-1\Big{)}\Big{(} \frac{x_{1}}{\frac{X_{1}+X_{2}}{2}}+\frac{y_{1}}{\frac{Y_{1}+Y_{2}}{2}}\Big{)}}\frac{\PD_{s}\Phi}{\omega}+ \\
&\left\{\frac{\Big{(}\lb \frac{X_{1}+X_{2}}{2s}\rb\lb
\frac{Y_{1}+Y_{2}}{2s}\rb-\frac{x_{1}^{2}y_{1}^{2}}{\lb
\frac{X_{1}+X_{2}}{2}\rb^{2}\lb
\frac{X_{1}+X_{2}}{2}\rb^{2}}\Big{)}-\Big{(}
\frac{x_{1}^{2}}{\lb\frac{X_{1}+X_{2}}{2}\rb^{2}}\Big{(}
\lb\frac{Y_{1}+Y_{2}}{2s}\rb\lb\frac{X_{1}+X_{2}}{2s}\rb-1\Big{)}\Big{)}}{2\Big{(} \lb\frac{Y_{1}+Y_{2}}{2s}\rb\lb\frac{X_{1}+X_{2}}{2s}\rb-1\Big{)}\Big{(} \frac{x_{1}}{\frac{X_{1}+X_{2}}{2}}+\frac{y_{1}}{\frac{Y_{1}+Y_{2}}{2}}\Big{)}}-\frac{y_{1}}{Y_{1}+Y_{2}}\right\}\PD_{\omega}\Phi.
\end{align*}

\subsection{Expression for $x_{2}^{2}-y_{2}^{2}$} Now we write $x_{2}^{2}-y_{2}^{2}$ in the form
\begin{equation}\label{A2}
A_{2}:=x_{2}^{2}-y_{2}^{2}=\frac{f_{21}(x,y,s)}{\omega}\PD_{s}\Phi+f_{22}(x,y,s)\PD_{\omega}\Phi,
\end{equation}
where $f_{21}$ and $f_{22}$ are smooth functions.
  $A_{2}$ in the coordinate system \eqref{Prolate coordinate system} is
\begin{align}
\notag A_{2}&=s^{2}\lb \sinh^{2}\rho\sin^{2}\phi\cos^{2}\theta-\sinh^{2}\rho'\sin^{2}\phi'\cos^{2}\theta'\rb\\
\label{First x2-y2 term}&=s^{2}\lb \sinh^{2}\rho\sin^{2}\phi-\sinh^{2}\rho'\sin^{2}\phi'\rb+\\
\label{Second x2-y2 term}&s^{2}\lb \sinh^{2}\rho'\sin^{2}\phi'\sin^{2}\theta'-\sinh^{2}\rho\sin^{2}\phi\sin^{2}\theta\rb.
\end{align}
For $x_{3}=y_{3}=0$, \eqref{Second x2-y2 term} is $0$. Therefore we focus only on the term \eqref{First x2-y2 term}, which we still denote as $A_{2}$, and obtain an expression of the form \eqref{A2} for this term.

Using the formulas $\sinh^{2}\rho=\cosh^{2}\rho -1$, $\sin^{2}\phi=1-\cos^{2}\phi$, and simplifying, we have
\begin{align*}
A_{2}&=s^{2}\lb(\cosh^{2}\rho-\cosh^{2}\rho')\sin^{2}\phi-(\cos^{2}\phi-\cos^{2}\phi')\sinh^{2}\rho'\rb\\
&=s^{2}\lb (\cosh \rho-\cosh\rho')(\cosh \rho+\cosh\rho')-(\cos \phi-\cos\phi')(\cos\phi+\cos\phi')\sinh^{2}\rho'\rb.
\end{align*}
Recall that $\cosh\rho-\cos\rho' = \frac{\PD_{\omega} \wt{\phi}}{2s}$ and using the  expression for $\cos\phi-\cos\phi'$ in \eqref{Diff. of cosines}, and setting $x_{3}=y_{3}=0$, we see that $x_{2}^{2}-y_{2}^{2}$ can be written in the form \eqref{A2}.
\subsection{Expression for $\xi_{1}-\eta_{1}$} Note that $\xi_{1}=\PD_{x_{1}}\Phi$ and $\eta_{1}=-\PD_{y_{1}}\Phi$ and so $A_{3}:=\xi_{1}-\eta_{1}=\PD_{x_{1}}\Phi+\PD_{y_{1}}\Phi$.

We have
\begin{align*}
A_{3}&=\omega\Bigg{\{} \Big{(} \frac{y_{1}-s}{|y-\g_{T}(s)|}+\frac{y_{1}+s}{|y-\g_{R}(s)|}\Big{)}-\Big{(} \frac{x_{1}-s}{|x-\g_{T}(s)|}+\frac{x_{1}+s}{|x-\g_{R}(s)|}\Big{)}\Bigg{\}}.
\intertext{In the coordinate system \eqref{Prolate coordinate system} this is}
&=2\omega\Bigg{(} \frac{\sinh^{2}\rho'\cos\phi'}{\cosh^{2}\rho'-\cos^{2}\phi'}-\frac{\sinh^{2}\rho\cos\phi}{\cosh^{2}\rho-\cos^{2}\phi}\Bigg{)}
\intertext{Simplifying this, we get}
&=2\omega \Bigg{\{} \frac{(\cos \phi-\cos\phi')(-\sinh^{2}\rho\cosh^{2}\rho'-\sinh^{2}\rho'\cos\phi\cos\phi')}{(\cosh^{2}\rho-\cos^{2}\phi)(\cosh^{2}\rho'-\cos^{2}\phi')}\\
&+\frac{(\cosh \rho-\cosh \rho')(\cosh \rho+\cosh \rho')\cos \phi'(1-\cos \phi\cos\phi')}{(\cosh^{2}\rho-\cos^{2}\phi)(\cosh^{2}\rho'-\cos^{2}\phi')}\Bigg{\}}.
\end{align*}
Now noting that $\cosh \rho-\cosh\rho'=\frac{\PD_{\omega}\wt{\Phi}}{2s}$ and using the formula \eqref{Diff. of cosines} for $\cos\phi-\cos\phi'$ and setting $x_{3}=y_{3}=0$, we can write $A_{3}$ in the form \eqref{Identity 3}.
\subsection{Expression for $(x_{2}-y_{2})(\xi_{2}+\eta_{2})$} Using the coordinate system \eqref{Prolate coordinate system}, we can write $A_{4}:= (x_{2}+y_{2})(\xi_{2}-\eta_{2})$ (up to a negative sign) as
\begin{align*}
(x_{2}-y_{2})(\xi_{2}+\eta_{2})&=\omega(x_{2}-y_{2})\lb
\frac{x_{2}}{\norm{x-\g_{T}}}+\frac{x_{2}}{|x-\g_{R}|}+\frac{y_{2}}{\norm{y-\g_{T}}}+\frac{y_{2}}{\norm{y-\g_{R}}}\rb\\
&=
\frac{2\omega}{s}\Bigg{(} \frac{x_{2}^{2}\cosh \rho}{\cosh^{2}\rho-\cos^{2}\theta}-\frac{y^{2}\cosh \rho'}{\cosh^{2}\rho'-\cos^{2}\theta'}\\
&\hspace{0.3in}+\frac{x_{2}y_{2}\cosh\rho'}{\cosh^{2}\rho'-\cos^{2}\theta'}-\frac{x_{2}y_{2}\cosh \rho}{\cosh^{2}\rho-\cos^{2}\theta}\Bigg{)}\\
&=\frac{2\omega}{s}\Bigg{(} \frac{x_{2}^{2}\cosh \rho}{\cosh^{2}\rho-\cos^{2}\theta}-\frac{x_{2}^{2}\cosh \rho'}{\cosh^{2}\rho'-\cos^{2}\theta'}\\
&+(x_{2}^{2}-y_{2}^{2})\frac{\cosh \rho'}{\cosh^{2}\rho'-\cos^{2}\theta'}\\
&+
\frac{x_{2}y_{2}\cosh\rho'}{\cosh^{2}\rho'-\cos^{2}\theta'}-\frac{x_{2}y_{2}\cosh \rho}{\cosh^{2}\rho-\cos^{2}\theta}\Bigg{)},\\
\end{align*}
Here we have added and subtracted $\frac{x_{2}^{2}\cosh \rho'}{\cosh^{2}\rho'-\cos^{2}\theta'}$ in the previous equation. Simplifying this we get,
\begin{align*}
(x_{2}-y_{2})(\xi_{2}+\eta_{2})=&
\frac{2\omega}{s}\Bigg{(}(x_{2}^{2}-x_{2}y_{2})\Bigg[
\frac{(\cosh \rho\cosh \rho'+\cos^{2}\theta)(\cosh \rho'-\cosh \rho)}{(\cosh^{2}\rho-\cos^{2}\theta)(\cosh^{2}\rho'-\cos^{2}\theta')}\\
&\quad\quad\phantom{(x_{2}^{2}-x_{2}y_{2})}+\frac{\cosh \rho(\cos\theta+\cos\theta')(\cos
\theta-\cos\theta')}{(\cosh^{2}\rho-\cos^{2}\theta)(\cosh^{2}\rho'-\cos^{2}\theta')}\bigg]\\
&+(x_{2}^{2}-y_{2}^{2})\frac{\cosh \rho'}{\cosh^{2}\rho'-\cos^{2}\theta'}\Bigg{)}.
\end{align*}

Now note that  $\cosh \rho'-\cosh \rho=\frac{\PD_{\omega}\Phi}{2s}$ and we already have expressions for $\cos\theta-\cos\theta'$ (Equation \eqref{Diff. of cosines}) and for $x_{2}^{2}-y_{2}^{2}$ involving combinations of $\PD_{\omega}\Phi$ and $\PD_{s}\Phi$.

Hence we can write $(x_{2}-y_{2})(\xi_{2}+\eta_{2})$ in the form
 of \eqref{Identity 4}.
Note that our calculation in this section shows
that \bel{handy-id}\begin{aligned}\frac{\cosh
\rho}{\cosh^{2}\rho-\cos^{2}\theta}-\frac{\cosh
\rho'}{\cosh^{2}\rho'-\cos^{2}\theta'} &= \frac{(\cosh \rho\cosh
\rho'+\cos^{2}\theta)(\cosh \rho'-\cosh
\rho)}{(\cosh^{2}\rho-\cos^{2}\theta)(\cosh^{2}\rho'-\cos^{2}\theta')}\\
&\qquad+\frac{\cosh \rho(\cos\theta+\cos\theta')(\cos
\theta-\cos\theta')}
{(\cosh^{2}\rho-\cos^{2}\theta)(\cosh^{2}\rho'-\cos^{2}\theta')}\end{aligned}\ee
This will be useful in the derivation of  \eqref{Identity 6} in \ref{Appendix:Identity6} below.
\subsection{Expression for $(x_{2}+y_{2})(\xi_{2}-\eta_{2})$}
This is very similar to the derivation of the expression we obtained for $(x_{2}-y_{2})(\xi_{2}+\eta_{2})$.
\subsection{Expression for $\xi_{2}^{2}-\eta_{2}^{2}$}\label{Appendix:Identity6}
We have
\begin{align*}
\xi_{2}^{2}-\eta_{2}^{2}&=\omega^{2}\lb
\lb\frac{x_{2}}{|x-\g_{T}|}+\frac{x_{2}}{|x-\g_{R}|}\rb^2-\lb\frac{y_{2}}{|y-\g_{T}|}+\frac{y_{2}}{|y-\g_{R}|}\rb^2\rb\\
&={4}\omega^{2}\lb x_{2}^{2}\frac{\cosh^{2}\rho}{(\cosh^{2}\rho-\cos^{2}\theta)^{2}}-y_{2}^{2}\frac{\cosh^{2}\rho'}{(\cosh^{2}\rho'-\cos^{2}\theta')^{2}}\rb\\
&={4}\omega^{2}\Bigg{\{} x_{2}^{2}\lb
\frac{\cosh^{2}\rho}{(\cosh^{2}\rho-\cos^{2}\theta)^{2}}-\frac{\cosh^{2}\rho'}{(\cosh^{2}\rho'-\cos^{2}\theta')^{2}}\rb\\
&+(x_{2}^{2}-y_{2}^{2})\frac{\cosh^{2}\rho'}{(\cosh^{2}\rho'-\cos^{2}\theta')^{2}}\Bigg{\}}.
\end{align*}
Now using the computations for $x_{2}^{2}-y_{2}^{2}$ and
$(x_{2}-y_{2})(\xi_{2}+\eta_{2})$, in particular \eqref{handy-id},
 we can write $\xi_{2}^{2}-\eta_{2}^{2}$ in the form
\[
\xi_{2}^{2}-\eta_{2}^{2}=\omega f_{61}(x,y,s)\PD_{s}\Phi+\omega^{2}f_{62}(x,y,s)\PD_{\omega}\Phi
\]
for smooth functions $f_{61}, f_{62}$.\\

\section{}\label{Factor in g(s,t)}
Here we explain the reason for setting $g(s,t)=0$ for $|t-2\sqrt{s^{2}+h^{2}}|<20\epsilon^{2}/h$
in \eqref{The
function g(s,t)}.

In the proof of Theorem \ref{theorem-decomposition} -- more precisely Lemma \ref{ImagingSection:Lemma F0} --  recall that we consider four squares with vertices $(\pm \epsilon,\pm\epsilon)$, $(\pm \epsilon,\pm2\epsilon)$, $(\pm 2 \epsilon,\pm\epsilon)$,
$(\pm 2\epsilon,\pm 2\epsilon)$.  The motivation to choose $g=0$ as above comes from the fact that we want the amplitude term $a$ of $F$ to be $0$ for those $(s,t)$ such that the ellipse defined by it is contained in a small neighborhood containing these squares.

One way to find this is as follows:

Given $(s,t)$, the ellipse $\sqrt{(x_{1}-s)^{2}+x_{2}^{2}+h^{2}}+\sqrt{(x_{1}+s)^{2}+x_{2}^{2}+h^{2}}=t$ can be written in the form
\[
(4t^2-16s^2)x_{1}^{2}+4t^{2}x_{2}^{2}=t^{4}-4t^{2}(s^{2}+h^{2}).
\]
Note that for this ellipse, the length of the semi-minor axis is always smaller that the length of the semi-major axis. The point $(2\epsilon,2\epsilon)$ is $2\sqrt{2}\epsilon$ away from the origin. Therefore let us choose a $t$ for which the ellipse passes through the point $(0,3\epsilon)$. The time $t$ is such that
\[
t^{2}-4(s^{2}+h^{2})=36\epsilon^{2}.
\]
Hence
\[
t-2\sqrt{s^{2}+h^{2}}=36\epsilon^{2}/(t+2\sqrt{s^{2}+h^{2}}).
\]
Since $t>0$ and $s>0$, we have
\[
t-2\sqrt{s^{2}+h^{2}}<18\epsilon^{2}/h.
\]
This explains the factor 18 in Lemma \ref{ImagingSection:Lemma F0}. Now choosing $20$ (any number bigger than $18$ would do) explains our choice of the constant in \eqref{The function g(s,t)}.

\bibliographystyle{plain}

\begin{thebibliography}{10}

\bibitem{Cheney2001}
Margaret Cheney.
\newblock A mathematical tutorial on synthetic aperture radar.
\newblock {\em SIAM Rev.}, 43(2):301--312 (electronic), 2001.

\bibitem{CheneyBordenbook}
Margaret Cheney and Brett Borden.
\newblock {\em Fundamentals of Radar Imaging}, volume~79 of {\em {CBMS-NSF
  Regional Conference Series in Applied Mathematics}}.
\newblock {Society for Industrial and Applied Mathematics}, 2009.

\bibitem{DuistermaatBook}
J.~J. Duistermaat.
\newblock {\em Fourier integral operators}.
\newblock Modern Birkh\"auser Classics. Birkh\"auser/Springer, New York, 2011.
\newblock Reprint of the 1996 original.

\bibitem{MR0405514}
J.~J. Duistermaat and V.~W. Guillemin.
\newblock The spectrum of positive elliptic operators and periodic
  bicharacteristics.
\newblock {\em Invent. Math.}, 29(1):39--79, 1975.

\bibitem{RF1}
Raluca Felea.
\newblock Composition of {F}ourier integral operators with fold and blowdown
  singularities.
\newblock {\em Comm. Partial Differential Equations}, 30(10-12):1717--1740,
  2005.

\bibitem{RF2}
Raluca Felea.
\newblock Displacement of artefacts in inverse scattering.
\newblock {\em Inverse Problems}, 23(4):1519--1531, 2007.

\bibitem{GU1990a}
Allan Greenleaf and Gunther Uhlmann.
\newblock Estimates for singular {R}adon transforms and pseudodifferential
  operators with singular symbols.
\newblock {\em J. Funct. Anal.}, 89(1):202--232, 1990.

\bibitem{MR999388}
Victor Guillemin.
\newblock {\em Cosmology in {$(2 + 1)$}-dimensions, cyclic models, and
  deformations of {$M_{2,1}$}}, volume 121 of {\em Annals of Mathematics
  Studies}.
\newblock Princeton University Press, Princeton, NJ, 1989.

\bibitem{Guillemin-Uhlmann}
Victor Guillemin and Gunther Uhlmann.
\newblock Oscillatory integrals with singular symbols.
\newblock {\em Duke Math. J.}, 48(1):251--267, 1981.

\bibitem{Ho1971}
Lars H{\"o}rmander.
\newblock Fourier integral operators. {I}.
\newblock {\em Acta Math.}, 127(1-2):79--183, 1971.

\bibitem{Krishnan-Quinto}
Venkateswaran~P. Krishnan and Eric~Todd Quinto.
\newblock Microlocal aspects of bistatic synthetic aperture radar imaging.
\newblock {\em Inverse Problems and Imaging}, 5:659--674, 2011.

\bibitem{MU}
Richard~B. Melrose and Gunther~A. Uhlmann.
\newblock Lagrangian intersection and the {C}auchy problem.
\newblock {\em Comm. Pure Appl. Math.}, 32(4):483--519, 1979.

\bibitem{NC2004}
Clifford~J. Nolan and Margaret Cheney.
\newblock Microlocal analysis of synthetic aperture radar imaging.
\newblock {\em J. Fourier Anal. Appl.}, 10(2):133--148, 2004.

\bibitem{NCDG}
Clifford~J. Nolan, Margaret Cheney, Thomas Dowling, and Romina Gaburro.
\newblock Enhanced angular resolution from multiply scattered waves.
\newblock {\em Inverse Problems}, 22(5):1817--1834, 2006.

\bibitem{FT2}
Fran{\c{c}}ois Tr{\`e}ves.
\newblock {\em Introduction to pseudodifferential and {F}ourier integral
  operators. {V}ol. 2}.
\newblock Plenum Press, New York, 1980.
\newblock Fourier integral operators, The University Series in Mathematics.

\end{thebibliography}
\def\dbar{\leavevmode\hbox to 0pt{\hskip.2ex \accent"16\hss}d}

\end{document}